\newcommand{\gen}[1]{\langle{#1}\rangle^+}
\newcommand{\F}{\mathcal{F}}
\newcommand{\G}{\mathcal{G}}
\newcommand{\N}{\mathbb{N}}
\newcommand{\R}{\mathbb{R}}
\newcommand{\IFS}{{\rm{IFS}}}
\newcommand{\Pgen}[1]{\langle\langle{#1} \rangle\rangle^+}
\theoremstyle{plain}
\newtheorem{theorem}{Theorem}[section]
\newtheorem{maintheorem}{Theorem}
\newtheorem{lemma}[theorem]{Lemma}
\newtheorem{corollary}[theorem]{Corollary}
\newtheorem{proposition}[theorem]{Proposition}
\newtheorem{claim}[theorem]{Claim}
\newtheorem*{claim*}{Claim}
\theoremstyle{definition}
\newtheorem{remark}[theorem]{Remark}
\newtheorem{definition}[theorem]{Definition}
\title[{\tiny Minimal iterated function systems}]{Robust minimality of iterated function systems with two generators} 
\author[{\tiny{Ale Jan Homburg}}]{Ale Jan Homburg}
\address{KdV Institute for Mathematics, University of Amsterdam, Science park 904, 1098 XH Amsterdam, Netherlands}
\address{Department of Mathematics, VU University Amsterdam, De Boelelaan 1081, 1081 HV Amsterdam, Netherlands}
\email{a.j.homburg@uva.nl}
\author[{\tiny{Meysam Nassiri}}]{Meysam Nassiri}
\address{School of Mathematics, Institute for Research in Fundamental Sciences (IPM), 
 P. O. Box 19395-5746,  Tehran, Iran}
\email{nassiri@ipm.ir}
\thanks{M.N. was in part supported by a grant from IPM (No. 88370121).
}
\begin{document}

\begin{abstract}
We prove that any compact  manifold without boundary admits
a pair of diffeomorphisms that generates  $C^1$ robustly minimal
dynamics. 
We apply the results to the construction of blenders and robustly transitive skew product diffeomorphisms.
\end{abstract}

\maketitle

\setcounter{tocdepth}{1}
\tableofcontents

\section{Introduction}
In this paper we study robust minimality of iterated function systems.
By an iterated function system one means the action of the semigroup generated by a family of diffeomorphisms. The study of iterated function systems (IFS, for short), besides its own importance, has a remarkable role for understanding certain (single) dynamical systems. For instance, one can embed an IFS in to a skew-product over the full shift with sufficient number of symbols; and therefore in many dynamical systems exhibiting some form of hyperbolicity. 

Let $\mathcal{F}, \mathcal{G}$ be two families of diffeomorphisms on a compact manifold $M$. 
Denote 
\begin{align*} 
\mathcal{F}\circ\mathcal{G} &= \{ f\circ g ~|~ f\in\mathcal{F}, g\in\mathcal{G} \},        
       \end{align*}
and 
for $k\in\mathbb{N}$, 
\begin{align*} \mathcal{F}^{k} := \mathcal{F}^{k-1}\circ \mathcal{F}, ~~~~\mathcal{F}^0:=\{Id\}.
\end{align*}
Write $\gen{\mathcal{F}}$ for the semi-group generated by $\mathcal{F}$, that is, 
$\gen{\mathcal{F}}=  \bigcup _{n=0}^{\infty} \mathcal{F}^{k}$.
The action of the semi-group $\gen{\mathcal{F}}$ is called the \emph{iterated function system} (or IFS)  associated to $\mathcal{F}$ and we denote it by   $\rm{IFS}\,(\mathcal{F})$.
For $x\in M$, we write the orbits of the action of this semi-group  as
\begin{align*}
\gen{\mathcal{F}}(x) &= \{f(x): f\in \gen{\mathcal{F}}\}. 
\end{align*}

A sequence $\{x_n : n\in \mathbb{N} \}$ is called a branch of an orbit of $\rm{IFS}\,(\mathcal{F})$ if for each $n\in \mathbb{N}$ there is $f_n\in \mathcal{F}$ such that $x_{n+1}=f_n(x_n)$. We say that $\rm{IFS}\,(\mathcal{F})$ is minimal if any orbit has a branch which is dense in $M$. 
We say that a property P holds $C^{r}$ robustly for $\rm{IFS}\,(\mathcal{F})$ if it holds for $\rm{IFS}\,(\tilde{\mathcal{F}})$ for any family 
$\tilde{\mathcal{F}}$ whose elements are  $C^{r}$ perturbations of elements of $\mathcal{F}$.

Here, we prove the following 

\begin{maintheorem}\label{thm A}
Any boundaryless  compact manifold  admits a pair of diffeomorphisms that generates a $C^1$ robustly minimal iterated function system.
\end{maintheorem}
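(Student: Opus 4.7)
The plan is to reduce Theorem~A to a local construction on a ball together with a Morse--Smale spreading argument that carries local minimality to the entire manifold.

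\textbf{Local step.} First I would construct, for every dimension $n$, two $C^1$ diffeomorphisms $h_1,h_2$ of a ball $B\subset\R^n$, supported in $B$, whose associated IFS is $C^1$-robustly minimal on a smaller open set $U\Subset B$. The natural tool is a \emph{blender}: arrange $h_1$ (or some short word in $h_1,h_2$) to have a hyperbolic basic set whose unstable set has $C^1$-robust nonempty interior, and choose $h_2$ to translate this unstable set across $U$, so that compositions of $h_1,h_2$ produce a family of open sets whose union covers $U$, robustly under $C^1$ perturbations. This covering property then implies robust minimality on $U$ by a standard argument.

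\textbf{Global step.} Any boundaryless compact manifold $M$ admits, by Morse theory, a Morse--Smale diffeomorphism $F_0$ with a unique source $p$ and a unique sink $q$; its forward iterates drive every $x\neq p$ into any prescribed neighborhood of $q$. In a chart around $q$ I would implant the local model, obtaining a diffeomorphism $f$ of $M$ that agrees with $F_0$ outside the chart and realizes $h_1$ inside it. For the second global generator $g$ I would similarly implant $h_2$ near $q$, but additionally compose with a globally supported isotopy so that the iterates of $g$ transport $U$ around $M$, giving a finite collection of words $w_1,\ldots,w_N$ in $f,g$ with $\bigcup_i w_i(U)=M$. To verify minimality, start from any $x\in M\setminus\{p\}$: iterate $f$ until landing in $U$; apply the local robust minimality to approach a prescribed point $u\in U$; then apply the word $w_i$ that maps $U$ to a neighborhood of the target. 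Robustness follows because each ingredient---attraction to $q$, local robust covering of $U$, and the global cover by the $w_i(U)$---is a $C^1$-open condition.

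\textbf{Main obstacle.} The hard part is the local step: producing $C^1$-robust minimality on an open set using only \emph{two} generators. With many contractions one can cover a ball trivially, but with two generators the covering must be produced by hyperbolic blender technology, and the $C^1$ robustness of the required interior of the unstable set is delicate (standard blender constructions are more comfortable in higher regularity). A secondary difficulty is arranging the second global generator $g$ so that its iterates sweep $U$ across $M$ without creating an unexpected attractor or invariant region that would obstruct global minimality; this has to be done while keeping $g$ globally a diffeomorphism and preserving the local blender structure near $q$.
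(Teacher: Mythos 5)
Your overall architecture (a local blending region generated by two maps, then a Morse--Smale mechanism to globalize) is the same as the paper's; in fact your local step is essentially the paper's alternative construction in Section~\ref{s:second}: two contractions, one of them a short word in the generators, whose images robustly cover an open box, producing an invariant set with robustly nonempty interior. One caveat there: the ``standard argument'' deducing robust minimality on $U$ from the covering needs the covering maps to be contractions on $U$ (as in Theorem~\ref{thm strong} and Lemma~\ref{lem sufficient}), not merely a blender-type unstable set with interior; with that understood, the local step is sound in outline.

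The genuine gap is in the globalization. It is false that a Morse--Smale diffeomorphism with a unique source $p$ and unique sink $q$ drives every $x\neq p$ into a prescribed neighborhood of $q$: except in very special cases Morse theory forces saddle periodic points, and every point on a stable manifold of a saddle never enters the basin of $q$. Minimality, moreover, must hold for \emph{every} $x\in M$ --- including $x=p$ and the saddle stable manifolds --- and robustly for all nearby pairs, so ``iterate $f$ until landing in $U$'' breaks down on the closed, nowhere dense exceptional set $M\setminus W^s(q)$, for which your proposal provides no mechanism. The paper closes exactly this hole by a specific choice of the second generator: the auxiliary map $\hat g$ (a word in the generators, $C^1$ close to $f^{-1}$, hence Morse--Smale) is chosen to share no periodic points with $f$ and to map the finitely many periodic points of $f$ into the basin of the attracting orbit; then any point is first iterated by $f$ near some periodic point, kicked by $\hat g$ into the basin, and then iterated into the blending region. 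A symmetric condition ($f^{-1}$ maps the periodic points of $\hat g$ into the basin of the attractor of $\hat g^{-1}$) is what makes the forward images of the blending region cover $M$; your finite collection of words $w_1,\dots,w_N$ with $\bigcup_i w_i(U)=M$ can only be extracted (by compactness, and then robustly, since the condition is open) after such a two-sided argument, because the images of $U$ under iterates of a single Morse--Smale map accumulate only on the open dense unstable manifold of a repeller and never reach the stable manifolds of its remaining periodic points. Adding these conditions on the second generator is precisely the missing step that turns your outline into the paper's proof.
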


The number of generators in Theorem~\ref{thm A} is optimal.
Indeed, a single diffeomorphism can not be $C^1$
robustly minimal.  Recall that a diffeomorphism is minimal if any orbit is dense. 
Pugh's closing lemma yields the
existence of periodic points for $C^1$ generic diffeomorphisms. 
Moreover, not every manifold admits a minimal
diffeomorphism. In dimension two, the only manifold with minimal
diffeomorphisms is the torus.  Examples of robustly minimal iterated function systems with more generators have been constructed in \cite{ghs} and \cite{MR2765143}. 

The study of minimal IFS has important consequences to study and to construct  robustly transitive diffeomorphisms. Recall that a diffeomorphism is transitive if it has a dense orbit, and it is ($C^1$) robustly transitive if every nearby  diffeomorphism (in $C^1$ topology) is also transitive. 
Some parts of the proof of Theorem~\ref{thm A} are closely related 
to the construction of blenders. The notion of blender 
introduced by Bonatti and D\'{\i}az \cite{MR1381990} is the main tool to construct robustly transitive diffeomorphisms.
The results of this paper permit us 
to give a straightforward and general construction of 
blenders and robustly transitive dynamics in arbitrary dimension along the lines of \cite{np}. 
It can be summarized as follows (cf. Theorem \ref{thm C}): 

\begin{quote}
{\textit{Let $F$ be a diffeomorphism with invariant compact partially hyperbolic set  $\Gamma$ such that $F|_\Gamma$ is conjugate (appropriately) to an iterated function system $\IFS(\F)$.
If $\IFS(\F)$ is {\emph{strongly robustly minimal}}, then $F$ is {\emph{robustly transitive}}  on $\Gamma$.}}
\end{quote}

The strong robust minimality of an IFS  means roughly that one may change the perturbed family of maps at each iteration and still obtain dense orbit for all points (cf. Definition \ref{def strong}). The main technical in this direction is  Theorem \ref{thm strong}, which implies the following improvement of Theorem \ref{thm A}.
\begin{maintheorem}\label{thm B}
Any boundaryless  compact manifold  admits a pair of diffeomorphisms that generates a $C^1$ strongly robustly minimal  IFS.
\end{maintheorem}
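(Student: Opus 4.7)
The plan is to refine the construction behind Theorem~\ref{thm A} so that the pair of diffeomorphisms admits a local mechanism strong enough to survive \emph{non-autonomous} perturbation, and then invoke Theorem~\ref{thm strong} to conclude. The distinction with Theorem~\ref{thm A} is that ordinary $C^1$-robust minimality only controls autonomous families $\widetilde{\F}$, in which each generator $f\in\F$ is replaced by a single nearby diffeomorphism $\tilde f$, whereas strong robust minimality tolerates a different $C^1$-small perturbation of the generators at each step of the composition.

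First I would extract from (or refine) the proof of Theorem~\ref{thm A} two separate ingredients. The first is a \emph{global covering} piece: finitely many words $h_1,\dots,h_N \in \gen{\{f,g\}}$ and a small open set $U \subset M$ with $\bigcup_i h_i^{-1}(\overline U) = M$, each $h_i$ being a composition of factors drawn from a fixed family of uniformly contracting maps on a controlled neighborhood. The second is a \emph{blender-type self-covering}: admissible words $w_1,\dots,w_m$ of a common length $k$, built around a hyperbolic periodic orbit with well-placed invariant manifolds, such that for every non-autonomous $C^1$-small perturbation of the factors the perturbed compositions $\tilde w_j$ still satisfy $\overline U \subset \bigcup_j \tilde w_j(U)$. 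The existence of this self-covering is the real engine of $C^1$-robust minimality in Theorem~\ref{thm A}, and corresponds to a blender in the sense of Bonatti--D\'{\i}az.

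Given these ingredients, the hypotheses of Theorem~\ref{thm strong} should follow routinely. Fix an open $V \subset M$, a starting point $x \in M$, and a non-autonomous perturbation $\{\tilde f_n\}$. Choosing the index $i$ with $x \in h_i^{-1}(\overline U)$ is a finite symbolic decision; uniform contraction of the factors of $h_i$ makes the corresponding non-autonomous perturbation of this finite prefix still map $x$ into $U$. Once inside $U$, iterating the blender self-covering steers the orbit into a prescribed preimage of $V$, again via a finite symbolic choice that is robust under non-autonomous perturbation.

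The main obstacle is that the long words $h_i$ might, a priori, lose their covering property when their factors are perturbed independently. I would address this by insisting at the construction stage that every factor of every $h_i$ and every $w_j$ lies in a fixed open family of uniformly contracting maps on a fixed open neighborhood. Then the set of admissible non-autonomous perturbations is an open product of such families, and all the required inclusions remain \emph{open} conditions on each individual factor; this is precisely what is needed to trigger Theorem~\ref{thm strong} and thereby conclude strong robust minimality.
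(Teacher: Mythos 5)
Your proposal takes essentially the same route as the paper: Theorem~\ref{thm B} is proved there by checking that the pair constructed for Theorem~\ref{thm A} satisfies the hypotheses of Theorem~\ref{thm strong} --- a finite family of contracting words satisfying the covering condition $\overline{D}\subset\F(D)$ on the blending region, together with a larger family $\G\supset\F$ of words with $M=\G(D)=\G^{-1}(D)$ for the globalization --- which is exactly your two ingredients. One correction: your insistence that the globalizing words $h_i$ be compositions of uniformly contracting factors is neither achievable (those words travel through regions where the generators expand, e.g.\ near repelling periodic points) nor needed, since persistence of the finite open cover $\bigcup_i h_i^{-1}(U)=M$ under time-dependent $C^1$-small perturbation follows simply from the bounded length of the words and compactness of $M$, and this is precisely what the hypothesis $M=\G(D)=\G^{-1}(D)$ of Theorem~\ref{thm strong} encodes without any contraction requirement on $\G\setminus\F$.
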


We provide two constructions that prove Theorem~\ref{thm A}, in Sections~\ref{s:min}
and \ref{s:second}. Those constructions yield  different type of examples in the application to robustly transitivity.
In Section \ref{s:strong r m} we study the strong robust minimality of  IFS and then we complete the proof of Theorem \ref{thm B}. 
The connection to blenders, and an application to the construction of robustly
transitive skew product diffeomorphisms, is presented in Section~\ref{s:skew}.

\section{Minimal iterated function systems}\label{s:min}

In this section we prove Theorem~\ref{thm A}.

\subsection{Local robust minimality}\label{sec local}
\begin{definition}
 Let $\F$ be a family of maps on the metric space $X$, and $Y \subset X$.
We say that $\rm{IFS}\,(\mathcal{F})$ is \emph{minimal on $Y$} if  for any $x\in Y$, the orbit $\gen{\mathcal{F}}(x)$ has a branch which is dense in $Y$.  
\end{definition}

Recall that a map $\phi$ on a metric space $(X,d)$ is a contraction if and only if there is a constant 
$0<\kappa<1$ such that $d(\phi(x), \phi(y))  \leq  \kappa d(x,y)$, 
for all $x, y \in X$.

The following proposition is a modification of Propositions 2.3 of \cite{np} 
(see also \cite{thesis}). We give a short proof in Section \ref{s:strong r m}.

\begin{proposition} \label{pro ifs 1}
Let $U$ and $V $ be two open disks  in $\mathbb{R}^n$ containing $0$, and  
$\phi: U \to V$ be a diffeomorphism  with 
$\phi(0)=0$.  If $D \phi_0$ is a contraction, then there exists $k \in \mathbb{N} $ such that 
for any small $\varepsilon > 0$ there exist $\delta>0$ and 
vectors $c_1, \dots , c_k \in B_{\varepsilon}(0)$ such that
$$B_{\delta}(0)\subset \overline{\mathcal{O}_{\F}^+(0)},$$
where $\F=\{\phi+c_1, \dots , \phi+c_k \}$ 
and $\mathcal{O}_{\F}^+(0)$ is a branch of the orbit $\gen{\F}(0)$.
Moreover, $\IFS(\F)$ is robustly minimal on $B_{\delta}(0)$. 
\end{proposition}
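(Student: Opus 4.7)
The plan is to apply Hutchinson attractor theory: choose the $c_i$ so that the attractor $K^*$ of $\F = \{\phi+c_1,\dots,\phi+c_k\}$ contains a ball $B_\delta(0)$, and then extract a single dense branch by a concatenation argument. First, fix $\mu \in (\|D\phi_0\|, 1)$ and $r_0>0$ such that $\phi$ is a $\mu$-Lipschitz self-map of $\overline{B_{r_0}(0)}$. For any $c_i \in B_\varepsilon(0)$ with $\varepsilon < (1-\mu)r_0$, each $\phi_i := \phi+c_i$ will then be a $\mu$-contraction of $\overline{B_{r_0}(0)}$ into itself, giving $\IFS(\F)$ a unique Hutchinson attractor $K^* = \bigcup_i \phi_i(K^*)$.

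The core geometric step is to arrange
\[
\overline{B_\delta(0)} \;\subseteq\; \bigcup_{i=1}^k \phi_i(B_\delta(0)) \quad \text{with strict interior slack.}
\]
Writing $A = D\phi_0$, one has $\phi(B_\delta(0)) = A(B_\delta(0)) + o(\delta)$ in Hausdorff distance as $\delta\to 0$, so after rescaling by $\delta$ the task reduces to covering $B_1(0)$ by translates $A(B_1(0))+v_i$. A volume/covering-number argument then yields a fixed integer $k$ (roughly $k \gtrsim |\det A|^{-1}$), depending only on $A$, together with vectors $v_1,\dots,v_k$ with $|v_i|\le 1+\mu$ realizing such a cover. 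Setting $c_i=\delta v_i$ with $\delta$ a small fixed fraction of $\varepsilon$, the $c_i$ land in $B_\varepsilon(0)$ and the $o(\delta)$ nonlinear correction will be absorbed by the slack.

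Granted the covering, I would show $B_\delta(0)\subset K^*$ as follows: for any $x\in B_\delta(0)$, iteratively pick indices $i_n$ and points $y_n\in B_\delta(0)$ with $y_0=x$ and $y_{n-1}=\phi_{i_n}(y_n)$; then $x=\phi_{i_1}\circ\cdots\circ\phi_{i_n}(y_n)$, the diameters of the composed images shrink geometrically, and $x$ equals the address point of $(i_n)$, which lies in $K^*$. Since $0\in B_\delta(0)\subset K^*$, each $z\in K^*$ is a limit $\lim_n \phi_{i_1}\circ\cdots\circ\phi_{i_n}(0)$ along its address, so $\gen{\F}(0)$ is dense in $K^*\supseteq B_\delta(0)$. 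Finally, enumerating a countable dense subset $\{y_j\}\subset B_\delta(0)$ and inductively appending finite words that bring the current branch endpoint within $1/j$ of $y_j$ will produce a single branch $\mathcal{O}_\F^+(0)$ dense in $B_\delta(0)$; the same construction starting from any $x\in B_\delta(0)$ yields minimality on $B_\delta(0)$.

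Robust minimality should then be immediate: any $C^1$-small perturbation $\tilde\phi_i$ of $\phi_i$ remains a contraction of $\overline{B_{r_0}(0)}$ with constant just above $\mu$, and the covering $\overline{B_\delta(0)} \subseteq \bigcup_i \tilde\phi_i(B_\delta(0))$ will persist thanks to the interior slack, so the same argument applies verbatim to $\tilde\F$. The main technical obstacle will be precisely this quantitative covering: one must fix $k$ and the $c_i$ so that the cover has enough slack to absorb both the $o(\delta)$ nonlinear correction and any $C^1$-small perturbation of $\F$, uniformly in a neighborhood of $\F$.
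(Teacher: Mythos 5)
Your proposal is correct, and its first half coincides with the paper's own proof: the paper also reduces everything to covering a rescaled ball by translated images of $\phi$, fixing $\lambda>0$ with $B_\lambda(0)\subset D\phi_0(B_1(0))$, covering $\overline{B_1(0)}$ by $k$ balls $B_\lambda(b_i)$ (so $k$ depends only on $D\phi_0$), and setting $c_i=\delta b_i$, so that $\overline{B_\delta(0)}\subset\bigcup_i\bigl(\phi(B_\delta(0))+c_i\bigr)$ for small $\delta$ --- the same linearize-and-rescale covering you obtain via your covering-number argument. The difference lies in how minimality is extracted from the covering. The paper feeds the covering and contraction properties into its general criterion, Theorem~\ref{thm strong}, whose proof (through Lemma~\ref{lem sufficient}) additionally uses the well-distributedness of fixed points of compositions (via Hutchinson) and delivers the stronger conclusion of \emph{strong} robust minimality, in which the perturbed family may change at every iterate; that stronger form is what is actually needed later for Theorem~\ref{thm B} and Theorem~\ref{thm C}. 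You instead argue directly with the Hutchinson attractor and symbolic addresses: the covering forces $B_\delta(0)\subset K^*$, uniform contraction makes address compositions converge from any initial point of the absorbing ball $\overline{B_{r_0}(0)}$, and concatenating address prefixes produces a dense branch from any starting point; robustness follows since the contraction estimate and the covering with interior slack (the latter persisting under $C^0$-small perturbation, e.g.\ by a degree argument) are open conditions. Your route is more elementary and self-contained and proves exactly the proposition as stated, while the paper's detour through Theorem~\ref{thm strong} is less direct but buys the strong robust version used elsewhere. The one point to make explicit in a write-up is the one you already implicitly handle: the address approximants $\phi_{i_1}\circ\cdots\circ\phi_{i_n}(0)$ compose new maps on the inside and are therefore not themselves a branch, so the final word-concatenation step, with intermediate points tracked inside $\overline{B_{r_0}(0)}$, is genuinely needed to produce the dense branch $\mathcal{O}^+_{\F}(0)$ and, applied to an arbitrary $x\in B_\delta(0)$, the claimed minimality.
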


\subsection{Proof of Theorem \ref{thm A}}\label{ss:proof}

\begin{definition}[weak hyperbolic point]
Let $p$ be a hyperbolic periodic point of $g$ of period $k$, we say that $p$ is
{\it $\delta$-weak hyperbolic} if 
$$ 1-\delta <  m(D_p g^k |_{E_p^s} ) < \parallel D_p g^k |_{E_p^s} \parallel <1 < m(D_p g^k |_{E_p^u}) < \parallel D_p g^k |_{E_p^u}  \parallel< \frac{1}{1-\delta}.$$
Here $E_p^s$ and $E_p^u$ are the stable and unstable subspaces at $p$.
\end{definition} 

\begin{proof}[Proof of Theorem~\ref{thm A}]
Let $M$ be a compact manifold of dimension $m\geq 2$. We refer 
to \cite{bdv} or \citep{1002.37017} for iterated functions systems in dimension one.
The proof has three steps.\\

{\bf Step 1.}
It is a standard consequence from Morse theory that one can 
take a Morse-Smale diffeomorphism $f_\circ$ on $M$ with a
unique attracting fixed point $o$.
We assume that the norm of $Df_\circ^{-1}(o)$ is sufficiently close 
to one (i.e. $o$ is a $\delta$-weak hyperbolic attractor with 
sufficiently small $\delta$).

We then modify
slightly the dynamics of $f_\circ$ in a neighborhood $U$ of $o$ in order to obtain a new diffeomorphism $f$ such that it has a periodic attracting
orbit $\{f^i(p)\}$ in $U$ with sufficiently large period $n$ and with very weak attraction. 
The period of the attracting orbit  and the rate of
its weak attraction depend to the dimension of the manifold. 

To do the modification we may assume that the open set $U$ is diffeomorphic to the product $\mathbb{D}^2\times \mathbb{D}^{m-2}$.  
If $m=2$, then we replace the dynamics on $U$ by the local diffeomorphism $f$ on $\mathbb{D}^2$ as in Figure~\ref{fig ifs}.

\begin{figure}[ht]
\begin{center}
\includegraphics[width=.45\textwidth]{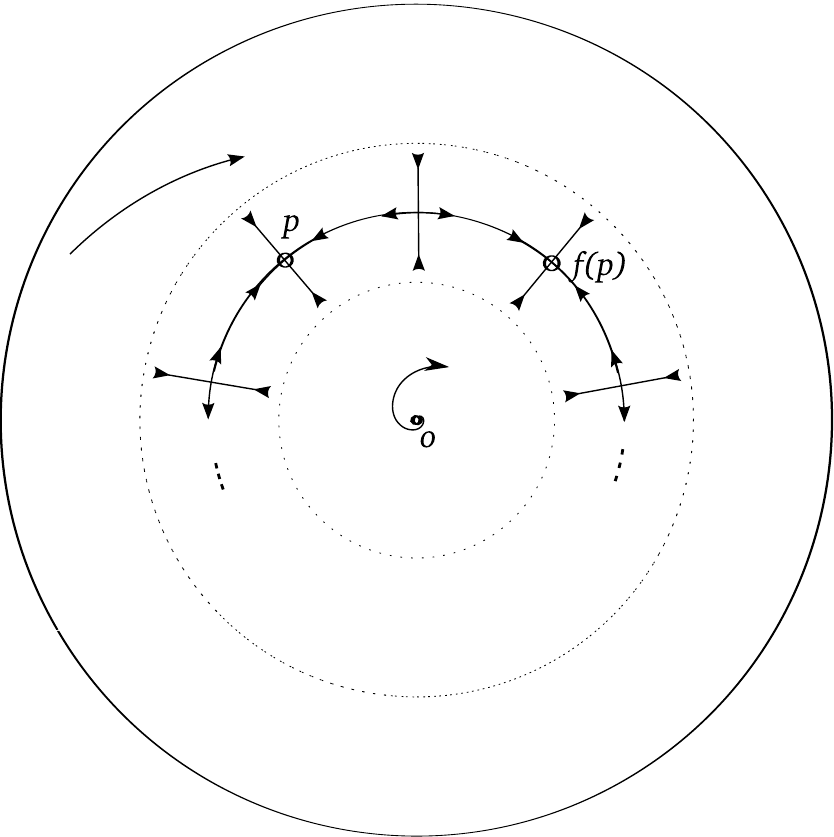}
\hspace{1cm}
\includegraphics[width=.45\textwidth]{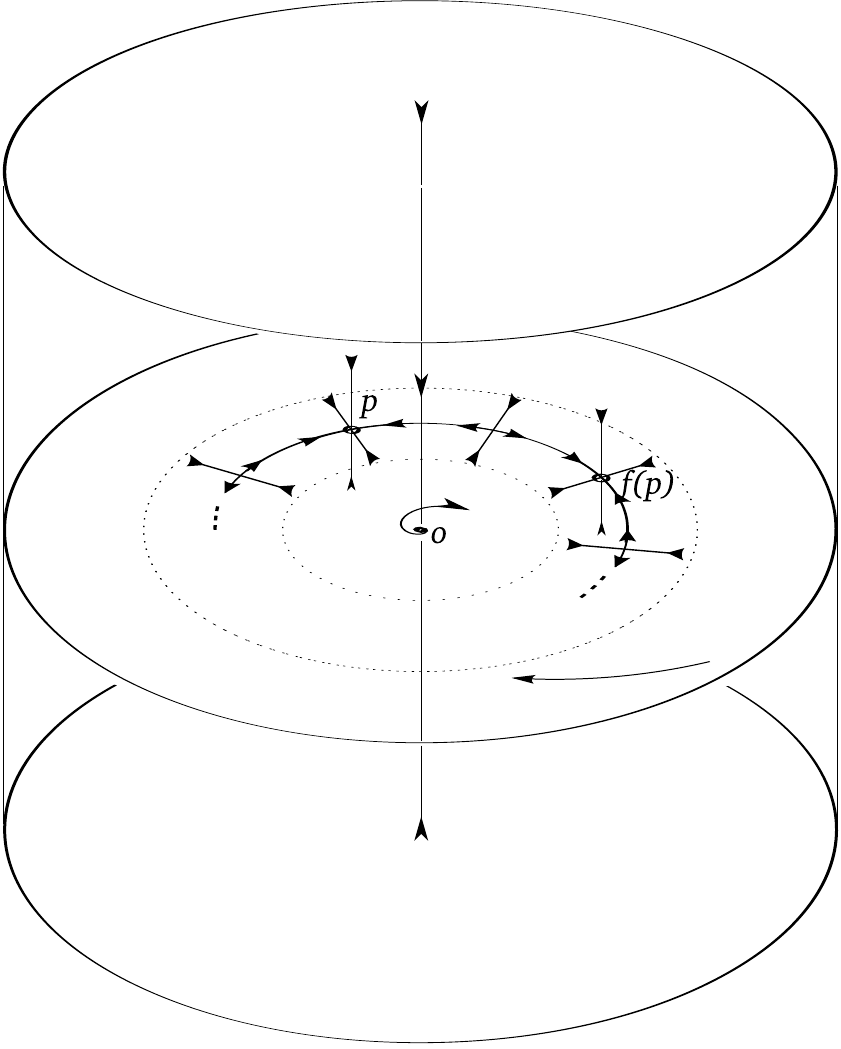}  
\caption{\label{fig ifs}Dynamics of $f$ if $\dim(M)=2$ (left picture) 
and $\dim(M) >2$ (right picture).}
\end{center}
\end{figure}

If $m>2$ then we define $h_1$ as the two dimensional map on $\mathbb{D}^2$ defined for the case $m=2$, 
and a contraction $h_2$ with a unique fixed point $o_2$ on $\mathbb{D}^{m-2}$ 
so that $o_2$ is a $\delta$-weak hyperbolic attractor, with a small $\delta>0$.  
Then we modify $f$ on $U$ to be the product $h_1\times h_2$.\\

{\bf Step 2.} (local minimality) 
We find a diffeomorphism $g$ arbitrarily close to $f$ in such a way that its dynamics 
close to the periodic orbit $\mathcal{O}_f(p) = \{f^i(p)\}$ is slightly different to $f$ 
and is chosen so that the IFS of $\{f,g\}$ is robustly minimal 
in a neighborhood of periodic orbit $\{f^{i}(p)\}_i$. 
Here we use the results of Section~\ref{sec local}.

Let $V_i:=f^i(V_0)$ be a sufficiently small neighborhoods of $f^i(p)$, for $i=0,\dots,$ $n-1$. 
We consider a local coordinate on  each $V_i$. 
By the assumptions,   $1-\epsilon<m(Df^n(p)) < || Df^n(p) || <1$. 
So, we may apply Proposition \ref{pro ifs 1} for the map $f^n$ on $V_0$, 
which gives very small vectors $c_1, \dots, c_k$, ($k<n$) 
on the local coordinate on $V_0$ so that the IFS of $\{f^n, T_{c_1}\circ f^n, \dots, T_{c_k}\circ f^n\}$ 
is robustly minimal on a neighborhood $D_0$ of $p$. 
Here,  $T_c$ is defined in local coordinates as the translation by the vector $c$.

Let $g_{_{\circ}}$ be a diffeomorphism on $M$ close to $f$ (so, it is Morse-Smale)  such that 
on $V_{n-i}$ it is equal to $f^{1-i}\circ T_{c_i} \circ f^{i}$.
Observe that $g_{_{\circ}}$ is well defined as $V_i$ are disjoint and $c_i$ are sufficiently small. 
Further, $f^{i-1}\circ g_{_{\circ}} \circ f^{n-i}$ on $V_0$ is equal to  $T_{c_i}\circ f^n$.
Let $g$ be any diffeomorphism $C^1$ close to $g_{_{\circ}}$. 
Then the IFS of $\{f, g\}$ is robustly minimal on $D_0$. 
We call such a local set with robustly minimal dynamics, a {\em blending region} of the iterated function system.

Observe that $\hat{g}:= g\circ f^{-2}$ is very close to $f^{-1}$ and so it is Morse-Smale. 
Moreover, the semigroup generated by $\{f, g \circ f^{-2}\}$ contains the semigroup generated by  $\{f, g\}$. 

From now on, we denote $g\circ f^{-2}$  by $\hat{g}$ for a given $g$.
Thus, for any $g$ close to $g_{_{\circ}}$, the ${\IFS}\,(\{f, \hat{g}\})$ is robustly minimal on $D_0$.
We may also assume that for any $g$ close to $g_{_{\circ}}$, $\hat{g}$ has a unique periodic repeller $p'$ such that $W^u(p')$ contains $D_0$.\\

{\bf Step 3.} (globalization)
We show that there exists a diffeomorphism $g$ close to $g_{_{\circ}}$ such that both positive and negative orbits 
of the blending region $D_0$ of $\IFS\,(\{f,\hat{g}\})$ cover the entire manifold $M$. 
If so, then for any point $x\in M$ and any open set $O\subset M$, there exist diffeomorphisms 
$h_1,h_2\in \gen{\{f, \hat{g}\}}$ such that, $h_1(x) \in D_0$ and $h_2^{-1}(O)\cap D_0 \neq \emptyset$.  
Since $D_0$ is a blending region of ${\IFS}\,(\{f, \hat{g}\})$, 
it follows that there exists $h_3\in \gen{\{f, \hat{g}\}}$ such that  $h_3(h_1(x)) \in h_2^{-1}(O)\cap D_0$, 
and so for $h:=h_2\circ h_3\circ h_1\in \gen{\{f, \hat{g}\}}$ we have $h(x)\in O$. 

Thus to  complete the proof of Theorem~\ref{thm A} it remains to show the above statement. 

Recall that for a Morse-Smale diffeomorphism the manifold $M$ is the union
of the stable manifolds of finitely many periodic points. 
Also, the union of basins of attraction of periodic attractors for a Morse-Smale diffeomorphism 
is an open and dense subset of the manifold: the stable manifold $W^s(p)$ for $f$ and the unstable manifold 
$W^u(p')$ for $\hat{g}$ are open and dense.

Pick $g$ so that $\hat{g}$ and $f$ have no common periodic points,
while $\hat{g}$ maps the periodic points of $f$ into the 
basin of attraction of $\{f^i(p)\}$. The negative orbits of $D_0$ then cover $M$. 
Similarly for positive orbits of $D_0$. They cover $M$ 
if $f^{-1}$ maps the periodic points of $\hat{g}$ into the basin of attraction
of the periodic attractor for $\hat{g}^{-1}$.  

Since these basins of attraction are open and dense in $M$,
these properties can be achieved for $g$  arbitrarily close to $g_0$.
This completes the proof of Theorem~\ref{thm A}.
\end{proof}

\section{Alternative construction}\label{s:second}

In this section we describe an alternative argument leading to Theorem~\ref{thm A}.
The local part of the argument, producing a blending region, 
is through an alternative construction.
The global part, the mechanism to iterate into and out of the blending region,
will be the same.
We start with the construction of a minimal iterated function systems
generated by affine maps on Euclidean spaces.

\subsection{Iterated function systems generated by affine maps}\label{s:rob}

We will provide an affine contraction $S$ and an affine expansion $T$ (i.e. $T^{-1}$ is a contraction) 
so that the iterated function system generated by $S$ and $T$ is minimal 
on all of $\mathbb{R}^m$.
Here we assume $m \ge 2$, compare \cite{bms} for minimal iterated functions systems 
on $[0,\infty)$ generated by affine maps.
Moreover, $S \circ T$ will be a contraction and the iterated function system generated by the two affine
contractions $S$ and $S\circ T$ possesses an attractor with nonempty interior.
Similar ideas were independently used by Volk \cite{vol13} in his construction of persistent attractors for endomorphisms.

Consider the rotation $R$ in $\mathbb{R}^m$, $m \ge 2$,
\begin{align*}
 R (x_1 , \ldots , x_m) &= (\pm x_m , x_1 , \ldots , x_{m-1}),  
\end{align*}
where the sign is such that $R \in SO(m)$, i.e., a minus sign for even $m$ 
and a plus sign for odd $m$.
Define further the translation $H(x_1,\ldots,x_m) = (x_1 + s , x_2 , \ldots,x_m)$ and the affine map $S$ on ${\mathbb R}^m$ by
\begin{align}\label{e:S}
 S (x_1, \ldots, x_m) &= H \circ r R (x_1,\ldots,x_m) = ( \pm r x_m + s , r x_1, \ldots, r x_{m-1}),
\end{align}
for constants $0<r<1$, $s>0$.
Likewise, define an affine map $T$ on  ${\mathbb R}^m$ by
\begin{align}\label{e:T}
 T (x_1, \ldots, x_m) &= (-a x_1, a x_2 , \ldots, a x_{m-1} , -a x_m - 2\frac{s}{r})
\end{align}
with $a>1$.
Similar to $S$, $T$ is the composition of a map from $SO(m)$ which is multiplied by a factor, $a$, 
and a translation.
Note that $S$ is a contraction by $r<1$, while $T$ is an expansion as $a>1$.  
Compute
\begin{align*}
 S \circ T (x_1, \ldots, x_m) &= H^{-1} \circ a r R (x_1,\ldots,x_m) 
\\
&=  (\mp a r x_m - s , - a r x_1 , a r x_2, \ldots , a r x_{m-1}).
\end{align*}
The affine map $S \circ T$ is a contraction for $a r <1$.

\begin{lemma}\label{l:lemme}
There are constants $0<r<1$, $s>0$, $a>1$ with $a r <1$
so that the iterated function system $\rm{IFS}\,(\mathcal{G})$ with
$\mathcal{G} =\{ S, S \circ T\}$
has an attractor $\triangle$ with nonempty interior.
Moreover, this interior contains the fixed point of $T$.
\end{lemma}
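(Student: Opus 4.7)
Existence of the attractor $\triangle$ is a direct application of Hutchinson's theorem: since $r < 1$ and $ar < 1$, both $S$ and $S \circ T$ are affine contractions, so the Hutchinson operator $\mathcal{H}(K) := S(K) \cup (S \circ T)(K)$ contracts the Hausdorff metric on nonempty compact subsets of $\mathbb{R}^m$, and its unique fixed point is $\triangle$. My plan for showing that the interior of $\triangle$ contains the fixed point $q = (0, \ldots, 0, -2s/(r(1+a)))$ of $T$ is to exhibit a compact set $C$ with $q \in \mathrm{int}(C)$ and some $N \in \mathbb{N}$ such that $C \subset \mathcal{H}^N(C)$. The inclusion iterates to an increasing chain $C \subset \mathcal{H}^N(C) \subset \mathcal{H}^{2N}(C) \subset \cdots$ whose Hausdorff limit is $\triangle$; hence $C \subset \triangle$ and so $q \in \mathrm{int}(\triangle)$.

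The parameters are chosen to make room for such a $C$. Fix $a > 1$ and pick $r$ in the nonempty open interval $((1+a^m)^{-1/m}, 1/a)$, which ensures $0 < r < 1$, $ar < 1$, and more: the Moran similarity dimension $d$ defined by $r^d + (ar)^d = 1$ strictly exceeds $m$. Intuitively, the total $m$-volume available from the two image maps of $\mathcal{H}$ exceeds that of the target, forcing overlap. The parameter $s > 0$ is a pure rescaling (conjugation by $x \mapsto \lambda x$ replaces $s$ by $s/\lambda$), so it will be taken last and sufficiently small that the constructed $C$ lies in any desired neighborhood of $q$.

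The construction of $C$ leverages the identity $(S \circ T)(q) = S(T(q)) = S(q)$, a direct consequence of $T(q) = q$: both generators send $q$ to a common image point $S(q)$, so near $q$ they agree to zeroth order and differ only in their linearizations $rR$ and $arRL$ respectively, where $L = \mathrm{diag}(-1, 1, \ldots, 1, -1)$. Iterating $N$ times produces $2^N$ affine similarities $f_w \in \gen{\mathcal{G}}$, each with contraction factor in $[r^N, (ar)^N]$ and rotation in the finite subgroup of $SO(m)$ generated by $R$ and $RL$; the images $f_w(q)$ populate a pattern around the $\mathcal{H}$-orbit of $q$. The main obstacle, and where the argument really lives, is the combinatorial/geometric verification that for sufficiently large $N$ the union $\bigcup_{|w| = N} f_w(\overline{B_\rho(q)})$ covers a closed ball $\overline{B_{\rho'}(q)}$ for some $\rho' > 0$. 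This uses $d > m$ quantitatively, to overwhelm the ambient volume, together with a non-degeneracy check that the rotations generated by $R$ and $RL$ spread the image centers transversely rather than collapsing them onto a lower-dimensional subvariety. Setting $C = \overline{B_{\rho'}(q)}$ then completes the proof.
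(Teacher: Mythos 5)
Your general scheme is the right one and is the same as the paper's: exhibit a compact set $C$ with the fixed point $q$ of $T$ in its interior and with $C$ contained in an iterate of its Hutchinson image, then conclude $C\subset\triangle$ by taking Hausdorff limits. But the proof has a genuine gap exactly where you say ``the argument really lives'': the covering claim $\overline{B_{\rho'}(q)}\subset\bigcup_{|w|=N}f_w(\overline{B_\rho(q)})$ is never established, and the ingredients you propose do not establish it. A volume count (your similarity dimension $d>m$, i.e.\ $r^m+(ar)^m>1$) shows at best that the total measure of the level-$N$ images exceeds that of the target; it does not prevent the images from overlapping heavily and leaving a dense set uncovered, and indeed whether self-similar sets of similarity dimension above $m$ (or even of positive Lebesgue measure) must have nonempty interior is a delicate question, not a routine ``non-degeneracy check.'' There is also a more basic obstruction to your construction: the level-$N$ images of $\overline{B_\rho(q)}$ are balls centered at the points $f_w(q)$, and for these to cover a ball around $q$ you need the semigroup orbit of $q$ to return near $q$ with controlled geometry --- which is essentially the statement being proved, not something you can assume. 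The identity $(S\circ T)(q)=S(q)$ that you lean on works against you rather than for you: it makes the two first-level images coincide at a point, collapsing rather than spreading the pattern of centers.

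By contrast, the paper avoids all of this with an explicit one-step covering. It takes $C=B$ to be a box with corners $(\pm 1,\pm v_2,\dots,\pm v_m)$, chooses $v_m<\dots<v_2<r$ all close to $1$ and $s=1-v_m^2$ close to $0$, and verifies by elementary inequalities ($rv_m+s>1$, $-rv_m+s<0$, $r>v_2$, $rv_2>v_3$, \dots, $rv_{m-1}>v_m$) that $S(B)$ and $S\circ T(B)$ are two overlapping slabs whose union contains $B$; the factor $a>1$ only helps, so $B\subset S(B)\cup S\circ T(B)$, hence $B\subset\triangle$. The fixed point $(0,\dots,0,-2s/(r(a+1)))$ of $T$ is then placed in the interior of $B$ by the additional inequality $2s<v_m r(a+1)$, which the same parameter choice satisfies. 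If you want to salvage your write-up, replace the volumetric/combinatorial step by an explicit candidate set and a concrete verification of one covering inclusion of this kind; as it stands, the decisive step is only conjectured.
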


\begin{proof}
Define the box $B(1,v_2,\ldots,v_m)$ with corners $(\pm 1,\pm v_2, \ldots , \pm v_m)$.
We will find $r,s,a$ and $v_2,\ldots,v_m$ so that 
\begin{align}\label{e:expB}
 S (B) \cup S \circ T (B) &\supset B
\end{align}
This is a consequence of the following conditions,
\begin{align}\label{e:B}
 r v_m +s > 1, ~~ 0> -r v_m + s, ~~ r > v_2, ~~ r v_2 > v_3, ~~ \ldots , 
 ~~ r v_{m-1} > v_m.
\end{align}
In fact,  \eqref{e:B} implies
\begin{align}\label{e:aB}
 a r v_m +s > 1,  ~~ 0> - a r v_m + s,  ~~ a r > v_2,  ~~  a r v_2 > v_3, ~~  \ldots, ~~  a r v_{m-1} > v_m,
\end{align}
and \eqref{e:B} and \eqref{e:aB} together give \eqref{e:expB}.
\begin{figure}[htbp]
\begin{center}

\setlength{\unitlength}{3108sp}%
\begingroup\makeatletter\ifx\SetFigFont\undefined%
\gdef\SetFigFont#1#2#3#4#5{%
  \reset@font\fontsize{#1}{#2pt}%
  \fontfamily{#3}\fontseries{#4}\fontshape{#5}%
  \selectfont}%
\fi\endgroup%
\begin{picture}(5043,3759)(1291,-5293)
\thinlines
{\color[rgb]{0,0,0}\put(2161,-5281){\line( 1, 1){1035}}
}%
{\color[rgb]{0,0,0}\put(4051,-5281){\line( 1, 1){1035}}
}%
{\color[rgb]{0,0,0}\put(2161,-5281){\framebox(1890,2700){}}
}%
{\color[rgb]{0,0,0}\put(2161,-2581){\line( 1, 1){1035}}
}%
{\color[rgb]{0,0,0}\put(4051,-2581){\line( 1, 1){1035}}
}%
\thicklines
{\color[rgb]{0,0,0}\put(5401,-5011){\line( 1, 1){900}}
}%
{\color[rgb]{0,0,0}\put(3151,-4111){\framebox(3150,2250){}}
}%
{\color[rgb]{0,0,0}\put(2251,-5011){\line( 1, 1){900}}
}%
{\color[rgb]{0,0,0}\put(2251,-2761){\line( 1, 1){900}}
}%
{\color[rgb]{0,0,0}\put(5401,-2761){\line( 1, 1){900}}
}%
\thinlines
{\color[rgb]{0,0,0}\put(3196,-4246){\framebox(1890,2700){}}
}%
\thicklines
{\color[rgb]{0,0,0}\put(2251,-5011){\framebox(3150,2250){}}
}%
\put(5491,-3616){\makebox(0,0)[lb]{\smash{{\SetFigFont{9}{10.8}{\familydefault}{\mddefault}{\updefault}{\color[rgb]{0,0,0}$B$}%
}}}}
\put(1306,-3796){\makebox(0,0)[lb]{\smash{{\SetFigFont{9}{10.8}{\familydefault}{\mddefault}{\updefault}{\color[rgb]{0,0,0}$S\circ T(B)$}%
}}}}
\end{picture}%
 
\caption{The image $S\circ T (B)$ covers over half the box $B$. The image $S(T)$ covers the 
remaining part of $B$, so that $S(B) \cup S\circ T(B)$ contains $B$. 
\label{f:box}}
\end{center}
\end{figure}

The repelling fixed point of $T$ is located at $(0,\ldots,0, \frac{-2s}{r(a+1)})$.
It lies in $B$ if 
\begin{align}\label{e:fpinB}
 2 s &< v_m r (a+1).
\end{align}
Now, \eqref{e:B} and \eqref{e:fpinB} can be satisfied by
taking suitable $v_m < \ldots < v_2 < r$ all near $1$
and $s = 1 - v_m^2$ near 0 so that \eqref{e:B} holds, 
and $a$ with $a r < 1$.

Since $S$ and $S\circ T$ are contractions, there is a ball $O$ 
that is mapped into itself by both $S$ and $S \circ T$, 
i.e., $\mathcal{G} (O) \subset O$.
Thus
\begin{align*}
\triangle &= \lim_{p\rightarrow\infty}\mathcal{G}^{p}(O)
\end{align*}
is a nonempty compact set that is invariant for $\rm{IFS}\,(\mathcal{G})$.
Since $S$  and $S \circ T$ are contractions,
$\triangle$ is the unique compact set that is invariant for
$\mathcal{L}$ \citep{MR625600}. 
Because $\mathcal{L} (B) \supset B$, the set $\triangle$ contains $B$.
\end{proof}

\begin{corollary}
The iterated function system $\rm{IFS}\,(\mathcal{G})$ is minimal on
$\mathbb{R}^m$.
\end{corollary}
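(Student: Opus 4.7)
The plan is to exploit the covering relation $\mathcal{G}(B) \supset B$ established in the proof of Lemma~\ref{l:lemme}. It will yield branches dense in $B$ from every starting point in $B$, and since the contraction $S$ drives every point of $\mathbb{R}^m$ into $B$ in finitely many steps, the density extends to arbitrary initial points in $\mathbb{R}^m$ (covering the attractor $\triangle \supset B$).

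\textbf{Step 1: branch density in $B$.} Fix $x_0, y \in B$. Iterating the relation $\mathcal{G}(B) \supset B$, for each $n$ I obtain a word $(g_1, \ldots, g_n) \in \mathcal{G}^n$ and a backward chain $z_0 = y, z_1, \ldots, z_n \in B$ with $g_k(z_k) = z_{k-1}$, so that $y = g_1 \circ g_2 \circ \cdots \circ g_n(z_n)$. Setting $h_k := g_{n-k+1}$ reverses the word order, and the $n$-th vertex of the branch of $x_0$ associated to $(h_1, \ldots, h_n)$ is
\[
h_n \circ \cdots \circ h_1(x_0) \;=\; g_1 \circ g_2 \circ \cdots \circ g_n(x_0),
\]
which differs from $y$ by at most $\kappa^n \mathrm{diam}(B)$ with $\kappa := \max(r, ar) < 1$. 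Each $h_k$ is a single element of $\mathcal{G}$, so this is a bona fide branch. Concatenating such finite words aimed at a countable dense sequence in $B$, with geometrically shrinking tolerances, produces one infinite branch from $x_0$ dense in $B$.

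\textbf{Step 2: reduction from $\mathbb{R}^m$.} A direct substitution into \eqref{e:S} and the formula for $S \circ T$ displayed in the text shows that, for the parameter regime of Lemma~\ref{l:lemme} ($s$ small, $r$ close to $1$, $ar < 1$), the unique fixed points of $S$ and $S \circ T$ have all coordinates of order $s$, hence lie well inside the box $B = B(1, v_2, \ldots, v_m)$. For any $x_0 \in \mathbb{R}^m$, the sequence $S^n(x_0)$ enters $B$ after finitely many steps; appending the infinite branch from Step~1 based at the landing point yields a branch of the $\IFS(\mathcal{G})$-orbit of $x_0$ that is dense in $B$, and since $\triangle$ is obtained as the closure of the forward iterates of $B$ under $\mathcal{G}$, one further layer of the construction produces density in $\triangle \subset \mathbb{R}^m$.

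The main obstacle is the word-order reversal in Step~1. The standard Hutchinson address of a point in the attractor codes the outer composition $g_1 \circ g_2 \circ \cdots$ (innermost maps chosen first), while a branch of an IFS orbit appends new maps on the outside. The elementary substitution $h_k := g_{n-k+1}$ swaps these orderings for finite words and is precisely what translates the preimage statement $\mathcal{G}(B) \supset B$ into density of forward branches built only from compositions of $S$ and $S \circ T$.
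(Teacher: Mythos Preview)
There is a genuine gap. Your Step~1 is correct and is essentially the mechanism behind Claim~\ref{af}: the covering $\mathcal{G}(B)\supset B$ together with uniform contraction on $B$ gives, from every $x_0\in B$, a branch dense in $B$. But Step~2 stops at ``density in $\triangle$'', and $\triangle$ is bounded, so this is not minimality on $\mathbb{R}^m$. In fact the literal reading $\mathcal{G}=\{S,S\circ T\}$ makes the claim impossible: both generators are contractions, there is a ball $O$ with $\mathcal{G}(O)\subset O$, and no branch that enters $O$ can ever leave it. As announced in the opening paragraph of Section~\ref{s:rob}, the intended statement is for the IFS generated by $S$ and $T$; the symbol $\mathcal{G}$ in the corollary must be read that way.

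The ingredient you are missing is the expansion $T$. Lemma~\ref{l:lemme} places the repelling fixed point of $T$ inside $B$, hence $\bigcup_{k\ge 0}T^k(B)=\mathbb{R}^m$. The globalization is then: given $x_0\in\mathbb{R}^m$ and open $U\subset\mathbb{R}^m$, pick $k$ with $T^{-k}(U)\cap B\neq\emptyset$; iterate $S$ to bring $x_0$ toward the fixed point of $S$ in $\triangle$ (and, if needed, further $\mathcal{G}$-words to enter $B$, since your order-of-$s$ estimate for that fixed point fails for odd $m$, where its first coordinate equals $s/(1-r^m)$ and need not be small when $r$ is near $1$); then apply your Step~1, noting $\{S,S\circ T\}\subset\gen{\{S,T\}}$, to hit $T^{-k}(U)\cap B$; and finish with $T^k$. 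Without $T$ available as a generator, this last step is unavailable and the argument cannot reach beyond $\triangle$.
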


The proof of Lemma~\ref{l:lemme} gives more than its statement 
as it includes arguments for $C^1$ robust occurrence of invariant sets with nonempty interior.
Denote by $\mathrm{Diff}^1(\mathbb{R}^m)$ the set of diffeomorphisms on $\mathbb{R}^m$, endowed
with the $C^1$ compact-open topology.

\begin{corollary}\label{c:rob}
There exists a neighborhood $W\subset\mathrm{Diff}^1(\mathbb{R}^m)\times\mathrm{Diff}^1(\mathbb{R}^m)$ of 
$(S,S\circ T)$ such that for each $(f_1,f_2)$ in this neighborhood,
 $\rm{IFS}\,(\mathcal{F})$ with $\mathcal{F}=\{f_1,f_2\}$ 
admits an invariant set with non-empty interior.
\end{corollary}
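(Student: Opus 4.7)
The plan is to show that the two features used in the proof of Lemma \ref{l:lemme} are both $C^1$-robust, and then to run the Hutchinson attractor argument with the perturbed maps. Specifically, I would fix the box $B$ and a suitable large closed ball $O \supset B$ (with $O$ mapped into its interior by both $S$ and $S \circ T$, which exists because both are affine contractions of $\mathbb{R}^m$), and then arrange for any small enough $C^1$ perturbation $(f_1,f_2)$ of $(S, S\circ T)$ to preserve three features: (i) contractivity of both maps on $O$, (ii) the inclusion $f_i(O) \subset O$ for $i=1,2$, and (iii) the strict covering $f_1(B) \cup f_2(B) \supset B$.

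Properties (i) and (ii) are $C^1$-open conditions on the compact set $O$: (i) because the operator norm of the derivative of $S$ (and of $S\circ T$) is a constant strictly below $1$, so it remains below $1$ uniformly on $O$ after any small $C^1$ perturbation; and (ii) because $S(O),\, (S \circ T)(O)$ lie in the interior of $O$, so $C^0$-small perturbations still map $O$ into $O$. Property (iii) is $C^0$-stable (hence $C^1$-stable) because the inequalities \eqref{e:B} and \eqref{e:aB} producing the covering of $B$ are strict, so $S(B) \cup (S\circ T)(B)$ actually contains an open neighborhood of $B$. Intersecting finitely many neighborhoods yields a single $C^1$-neighborhood $W$ on which all three features hold.

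For $(f_1,f_2)\in W$ and $\F = \{f_1,f_2\}$, the Hutchinson operator $\mathcal{L}_{\F}(K) = f_1(K) \cup f_2(K)$ is a contraction on the complete metric space of nonempty compact subsets of $O$ with the Hausdorff metric, so by \cite{MR625600} it has a unique fixed point $\triangle_{\F} \subset O$, which satisfies $\triangle_{\F} = f_1(\triangle_{\F}) \cup f_2(\triangle_{\F})$ and is $\IFS(\F)$-invariant. Feature (iii) combined with the monotonicity of $\mathcal{L}_\F$ makes $\{\mathcal{L}_\F^p(B)\}_{p \ge 0}$ an increasing sequence of compact sets whose Hausdorff limit is $\triangle_{\F}$; therefore $B \subset \triangle_{\F}$, and in particular $\triangle_{\F}$ has nonempty interior. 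The argument is essentially routine; the only point requiring care is verifying that a single $C^1$ perturbation bound witnesses contractivity on $O$ (so Hutchinson's theorem applies) together with the strict covering of $B$ (so that the resulting invariant set is large enough), which is exactly why working with the compact sets $B \subset O$ from the proof of Lemma \ref{l:lemme} is convenient.
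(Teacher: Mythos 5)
Your proposal is correct and follows essentially the same route the paper intends: the corollary is justified there by the remark that the proof of Lemma~\ref{l:lemme} only uses open conditions --- the strict inequalities \eqref{e:B}--\eqref{e:aB} giving $\mathcal{L}(B)\supset B$ and the contraction property with an absorbing ball $O$, which feed into Hutchinson's theorem --- and you have simply made this robustness argument explicit. Your handling of the three $C^1$/$C^0$-open conditions on the compact sets $B\subset O$ and the monotonicity argument showing $B\subset\triangle_{\F}$ matches the paper's reasoning.
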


In rescaled coordinates $(y_1,\ldots,y_m) = h (x_1,\ldots,x_m) = (\delta x_1,\ldots,\delta x_m)$, $\delta >0$, the attractor 
$\triangle$ is multiplied by a factor $\delta$.
The affine maps computed in the $(y_1,\ldots,y_m)$
coordinates become \[h \circ S \circ h^{-1} (y_1,\ldots,y_m) = (\pm r y_m + \delta s , r y_1,\ldots,r y_{m-1})\] 
and  
\[h \circ T \circ h^{-1} (y_1,\ldots,y_m) = (-a y_1 , a y_2,\ldots,a y_{m-1}, - a y_m - 2 \frac{\delta s}{r});\]
the maps are unaltered except for the translation vector which is multiplied by $\delta$.
In other words, if $S$ and $T$ are affine maps as above 
so that $\mathcal{G} (\mathbb{R}^m; S,T)$ has an attractor $\triangle$, than replacing $s$ in their expressions 
by $\delta s$ yields an iterated function system with attractor $\delta \triangle$.

\subsection{Second proof of Theorem~\ref{thm A}}\label{s:proof}

The proof of Theorem~\ref{thm A} in Section~\ref{ss:proof} consists of a local part,
the construction of a blending region in steps 1 and 2, and a global part in step 3 involving
a mechanism to move from the blending region to other parts of the manifold, and back, by iterating.
The same construction can be followed here, where Lemma~\ref{l:lemme} used in a chart on the manifold
provides a blending region. Compare \cite{ghs}.

Specifically we obtain two diffeomorphisms $g_1,g_2$ on $M$ 
generating a robustly minimal iterated function system with the following properties:
\begin{enumerate}
 \item for $i=1,2$, $g_i$ has a unique attracting fixed point  $p_i$ and a unique repelling fixed point $q_i$,
 \item there is a blending region, containing $p_1$ and $q_2$, on which $g_1$ and $g_1\circ g_2$ 
are contractions.
\end{enumerate}
The diffeomorphisms $g_1,g_2$ are not $C^1$ close to the identity, 
but the construction can be done so that $m(Dg_1), |Dg_1|, m(Dg_2), |Dg_2|$ are everywhere close to $1$.

\section{Strong robust minimality}\label{s:strong r m}
In this section we introduce the notion of strong robust minimality for iterated function systems.

\subsection{Notations and definitions}\label{s:def robust}
Let $X$ be a metric space, and $\F$ be a family of maps on $X$.
\begin{itemize}
\item For any $Y\subset X$ and $x\in X$, we denote $\F(Y):= \bigcup_{\phi\in\F} \phi(Y)$ and $\F(x):= \F(\{x\})$. 
\item If each element of  $\F$ is invertible, then we denote $\F^{-1}= \{ f^{-1}~|~ f\in\F\}$.
\item Let $\bf{F}=(\F_1, \F_2, \dots)$, where  $\F_i$ is a family of maps on $X$, for any $n\in \N$. then we denote $\F^{(n)}= \F_n\circ \dots \circ\F_1$, $\F^{(1)}=\F_1$, $\F^{(0)}= \{Id\}$, 
$$ \Pgen{ {\bf{F}} }:=  \bigcup_{n=0}^{\infty}  \F^{(n)},$$
 and its orbits by $\Pgen{{\bf{F}}}(x) := \{f(x): f\in \Pgen{ {\bf{F}} } \}$ for some $x\in X$.
\item We also denote $\F_{m}^{(n)}:=\F_{m+n}  \circ \dots  \circ \F_{m+1}$ for $n,m \in \N\cup\{0\}$.
\item Let $x\in X$. A sequence $\{x_n : n\in \mathbb{N} \} \subset X$ is called a branch of the orbit  $\Pgen{{\bf{F}}}(x)$ if for any $n\in \mathbb{N}$ there is $f_n\in \F^{(n)}$ such that $x_{n+1}=f_n(x_n)$.
\end{itemize}

Let  $\F$ be a family (with $k$ elements) of $C^r$ diffeomorphisms of a manifold $M$.
A neighborhood $\mathcal{U}$ of $\F$ is the set of all families $\F'$ whose elements are $C^{r}$ perturbations of elements of $\mathcal{F}$.

\begin{definition}\label{def strong}
We say that $\rm{IFS}\,(\mathcal{F})$ is $C^{r}$ \emph{strongly} robustly minimal if there is  a neighborhood $\mathcal{U}$ of $\F$ such that for any $x\in M$ and 
any sequence $\bf{F}=(\F_1, \F_2, \dots)$ in $\mathcal{U}$ 
there is a branch of the orbit of $\Pgen{{\bf{F}}}(x)$ which is dense in $M$.
\end{definition}

It is clear that this is stronger than robust minimality defined in the introduction.
We will see how this notion is natural and effective when one applies the robust minimality of IFS to construct examples of robustly transitive diffeomorphisms.

We say that a map $\phi$ on a metric space $(X,d)$ is  bi-Lipschitz  with constants $\lambda>0$ and $\kappa$, if  $\lambda d(x,y) \leq d(\phi(x), \phi(y))  \leq  \kappa d(x,y)$ for all $x, y \in X$.  
If $0<\kappa<1$ then $\phi$ is a contraction. Then, we also say  $\lambda$ is the  
\textit{contraction-lower-bound} of $\phi$.

\subsection{Sufficient conditions for strong robustness}\label{s:strong thms}
\begin{theorem} \label{thm strong}
 Let $M$ be a boundaryless compact manifold  and $D$ be an open subset of $M$.
 Let $\F$ be a finite family of diffeomorphisms of $M$ such that 
\begin{enumerate}
\item for any $f\in \F$, $f|_{\overline{D}}$ is a contraction,
\item $\overline{D} \subset \F(D)$  
\end{enumerate}
Then $\IFS(\F)$ is strongly robustly minimal on $D$. 

Let $\G$ be a family of diffeomorphisms  of $M$ such that $\F \subset \G$ and $M = \G(D) = \G^{-1}(D)$. Then $\IFS(\G)$ is strongly robustly minimal (on $M$).
\end{theorem}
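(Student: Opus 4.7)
The plan is to prove the theorem in two parts, using the second as a globalization of the first, with a transitivity lemma supplying the density.

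\textbf{Robustness of hypotheses.} First I would shrink the $C^{1}$ neighborhood $\mathcal{U}$ of $\F$ so that every $\F'\in\mathcal{U}$ consists of uniform $\kappa$-contractions on $\overline{D}$ with a common $\kappa<1$, and still satisfies $\overline{D}\subset\F'(D)$. The contraction property is immediate from compactness of $\overline{D}$ and continuity of $Df$. For the covering, I first pass to a slightly smaller open set $D'\Subset D$ with $\overline{D}\subset\F(D')$, which is possible since $\overline{D}$ is compact and $\F(D)$ is a finite union of open sets; then $\overline{D}\subset\F'(D')$ is preserved for $\F'$ close enough to $\F$ by $C^{0}$-continuity. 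Analogously, shrink a neighborhood $\mathcal{V}$ of $\G$ so that every $\G'\in\mathcal{V}$ contains a perturbed copy $\F'\subset\G'$ lying in $\mathcal{U}$, and robustly satisfies $M=\G'(D)=(\G')^{-1}(D)$.

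\textbf{Part 1 (branches dense in $D$).} The key tool is a transitivity lemma: for every $\epsilon>0$ there is $N=N(\epsilon)$ such that for all $x,y\in\overline{D}$ and all $(\F_1,\dots,\F_N)\in\mathcal{U}^{N}$, one can choose $f_i\in\F_i$ with $d(f_N\circ\cdots\circ f_1(x),y)<\epsilon$. I prove this by a backward preimage construction: fix $N$ with $\kappa^{N}\mathrm{diam}(\overline{D})<\epsilon$, set $y_N=y$, and inductively choose $f_i\in\F_i$ with $y_{i-1}:=f_i^{-1}(y_i)\in D'\subset\overline{D}$, using the robust covering $\overline{D}\subset\F_i(D')$. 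Then $h:=f_N\circ\cdots\circ f_1$ sends $y_0\mapsto y$ and is a $\kappa^{N}$-Lipschitz map on $\overline{D}$, so $d(h(x),y)\le\kappa^{N}\mathrm{diam}(\overline{D})<\epsilon$. A dense branch is then built by enumerating a countable base $\{B_k\}$ of $D$ and, starting from $x\in D$, using the lemma on consecutive chunks of $(\F_1,\F_2,\dots)$ to extend the branch into each $B_k$ in turn.

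\textbf{Part 2 (branches dense in $M$).} Enumerate a countable base $\{C_k\}$ of $M$ and, for each $k$, fix $w_k\in D$, $g_k^{*}\in\G$, and a small open neighborhood $U_k\ni w_k$ with $g_k^{*}(U_k)\subset C_k$; this is possible because $M=\G(D)$ and $g_k^{*}$ is continuous. Given $(\G_1,\G_2,\dots)\in\mathcal{V}^{\N}$ and $x\in M$, I build the branch in three-step cycles: (i) apply one element of the current $\G_n$ to move the endpoint into $D$, possible by $M=\G_n^{-1}(D)$; (ii) invoke the Part 1 transitivity lemma using only the subfamily $\F_j'\subset\G_j$ at each intervening step $j$ to steer the branch into $U_k$; (iii) apply the element of the then-current $\G_n$ that perturbs $g_k^{*}$ to land inside $C_k$ (by continuity, for $\mathcal{V}$ small enough and $U_k$ small enough). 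Iterating over all $k$ yields a branch dense in $M$.

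\textbf{Main obstacle.} The main technical point is the robustness of the covering hypotheses under $C^{1}$ perturbation, which I handle via the ``slightly shrunken set'' device $D'\Subset D$; without this, steps (i) and (ii) of both parts would fail to survive perturbation. A secondary subtlety is that the chained contraction estimate in Part 1 tacitly uses $f(\overline{D})\subset\overline{D}$ (the natural reading of ``$f|_{\overline{D}}$ is a contraction'' of $\overline{D}$ into itself); if one reads the hypothesis more loosely, one extends the Lipschitz bound to a small compact neighborhood of $\overline{D}$ in $M$ with a slightly larger rate $\kappa'<1$ and redoes the estimate with $\kappa'$.
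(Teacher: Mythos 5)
Your Part 1 takes a genuinely different, and in fact simpler, route than the paper. The paper proves the local statement through Lemma~\ref{lem sufficient}, which besides contraction and covering needs a third ingredient, the $\delta$-density in $D$ of fixed points of the (composed, perturbed) families; this is supplied by Hutchinson's theorem applied to $\F^{n_0}$ and persistence of the fixed points of $\F_m^{(n_0)}$ under perturbation, and density is then produced by a two-case ball argument (contract onto a nearby fixed point when the target ball is large, pull the target back through the covering until it is large). Your backward-preimage construction --- pull the target point back into $D'$ using $\overline{D}\subset\F_i(D')$, then push an arbitrary starting point forward along the same word and invoke the uniform contraction --- dispenses with the fixed-point machinery entirely and yields the uniform $N(\epsilon)$ directly; that is a real simplification. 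It is correct modulo the tacit requirement that the intermediate images of $\overline{D}$ stay where the contraction estimate applies; note that your fallback needs the contraction on a neighborhood of $\overline{D}$ of size comparable to $\kappa\,\mathrm{diam}(\overline{D})$, not merely a ``small'' one, since $f_1(x)$ may lie that far from $\overline{D}$. Since the paper's own Lemma~\ref{lem sufficient} (part (a) of the Claim) uses the same tacit invariance, and you flag the point explicitly, I do not count it against you. Your robustification via $D'\Subset D$ is also fine.

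Part 2, however, has a genuine gap at step (iii). The neighborhood $\mathcal{V}$ of $\G$ must be fixed once and for all in Definition~\ref{def strong}; it determines a uniform $C^0$ perturbation size $\eta>0$. Infinitely many basis elements $C_k$ have diameter less than $\eta$, and for such $C_k$ a perturbation $\tilde g$ of $g_k^{*}$ inside $\mathcal{V}$ may move $g_k^{*}(w_k)$ out of $C_k$ by almost $\eta$, so no choice of ``small $U_k$'' helps, and ``$\mathcal{V}$ small enough'' is not available because $\mathcal{V}$ cannot depend on $k$. As written, the branch is never forced into small basis elements, so density in $M$ is not proved. The repair is to reverse the selection order, which is exactly the globalization scheme of Step 3 of the paper's proof of Theorem~\ref{thm A}: since the sequence $(\G_1,\G_2,\dots)$ is given in advance and $M=\G'(D')$ holds for every $\G'\in\mathcal{V}$ (your own robustness paragraph), choose the time $n$ at which the branch will leave $D$ and a map $\tilde g\in\G_n$ with $\tilde g(D')\cap C_k\neq\emptyset$; take as steering target the nonempty open set $\tilde g^{-1}(C_k)\cap D$, whose inradius is bounded below in terms of the inradius of $C_k$ and the uniform Lipschitz bounds valid on $\mathcal{V}$, so the number $N$ of Part 1 steering steps can be fixed before $n$ is chosen and there is no circularity; then apply $\tilde g$. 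With this modification (and the invariance caveat above), your argument closes; the paper itself only writes out the proof of the statement on $D$ and leaves this globalization implicit.
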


Before proving the theorem we prove a basic lemma.

\begin{lemma}\label{lem sufficient}
Let $X$ be a connected metric space,  
and  $D\subset X$ be a bounded open set. Let  $0<\lambda<\kappa <1$. 
Let $\bf{F}=(\F_1, \F_2, \dots)$ be such that for any $n\in\N$, 
$\F_n$ is a family of homeomorphisms of $X$ such that 
\begin{enumerate}
\item for any $f\in \F_n$, $f|_D$ is bi-Lipschitz  with constants $\lambda$ and $\kappa$, 
\item $\overline{D} \subset \F_n(D)$, 
\item  $Z_n$ is  $\delta$-dense in $D$, 
\end{enumerate}
where $Z_n = \{x\in D ~|~ f(x)=x {\rm~for~some~} f\in\F_n\}$, and  $2 \delta >0$ is smaller than the Lebesgue number of a covering $\mathcal{A}$ of $\overline{D}$ with $\mathcal{A}\subseteq \{f(D)\cap \overline{D} ~|~ f\in \F_n\}$. 
Then for any $x\in D$, $\Pgen{{\bf{F}}}(x)$ has a branch which is dense in $D$.
\end{lemma}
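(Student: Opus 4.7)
The plan is to build the dense branch step-by-step so that it visits a prescribed dense sequence with shrinking errors. I would fix an enumeration $\{z_k\}_{k\ge 1}$ of a countable dense subset of $D$, and inductively find integers $0 = N_0 < N_1 < N_2 < \cdots$ together with maps $g_i \in \F_i$ for each $i \le N_k$ (setting $x_0 = x$ and $x_i := g_i(x_{i-1})$) such that $d(x_{N_k}, z_k) < 1/k$; density of the resulting branch in $D$ is then immediate. The real content of the lemma is the inductive step, which I would isolate as the following \emph{steering claim}: given $p \in D$, $y \in D$, $\epsilon > 0$, and a starting index $N$, there exist $K \ge N$ and choices $g_i \in \F_i$ for $N \le i \le K$ with $g_K \circ \cdots \circ g_N(p) \in B(y,\epsilon)$.

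To prove the steering claim my plan is to combine a \emph{backward preimaging} phase (driven by the Lebesgue hypothesis) with a \emph{forward aiming} phase (driven by the $\delta$-density of $Z_n$). Pick integers $L, M$ to be chosen later and set $K := N + L + M - 1$. Working backwards from $y_K := y$, for $k = K, K-1, \ldots, K-M+1$ I would invoke the Lebesgue condition at $y_k$ to select $g_k \in \F_k$ with $B(y_k,\delta)\cap\overline{D}\subset g_k(D)$ and set $y_{k-1} := g_k^{-1}(y_k)$, which lies in $D$; this produces points $y_K, y_{K-1}, \ldots, y_{K-M} \in D$ and fixes the last $M$ maps. For the first $L$ steps I would then aim forward: for each $i$ with $N \le i \le K-M$, condition~(3) lets me choose $g_i \in \F_i$ whose fixed point $w_i \in Z_i$ satisfies $d(w_i, y_{K-M}) < \delta$. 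A standard contraction computation gives $d(x_i, w_i) \le \kappa\, d(x_{i-1}, w_{i-1}) + 2\kappa\delta$ as long as both $x_{i-1}, w_{i-1} \in D$, so that
\[
 d(x_{K-M}, y_{K-M}) \;\le\; \kappa^{L}\,\mathrm{diam}(\overline{D}) + \frac{(1+\kappa)\delta}{1-\kappa} \;=:\; R_{\ast}.
\]
Applying the preimaging maps $g_{K-M+1},\ldots,g_K$ forward from $x_{K-M}$ then shrinks this discrepancy by a further factor $\kappa^M$, so $d(x_K, y) \le \kappa^M R_{\ast}$, which is $<\epsilon$ as soon as $L$ and $M$ are taken sufficiently large.

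The main obstacle I anticipate is showing that every intermediate iterate $x_i$ stays in $D$, so that the bi-Lipschitz contraction assumption applies at each step. The backward phase is benign: the Lebesgue condition forces $y_{k-1}$ to sit in $D$ with an interior cushion of radius at least $\delta/\kappa$ around it (because $g_k^{-1}$ expands by at least $1/\kappa$), so the preimage points are uniformly bounded away from $\partial D$. The aiming phase is more delicate, because the estimate above only locates $x_i$ to within a fixed multiple of $\delta$ of $y_{K-M}$; keeping this neighborhood inside $D$ forces $\delta$ to be small compared to the interior cushion of $y_{K-M}$, a constraint that can again be secured from the Lebesgue hypothesis applied one further step backwards. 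Once this boundary bookkeeping is arranged and the constants $L, M, \delta$ are chosen in the right order, iterating the steering claim through the dense sequence $\{z_k\}$ produces the required dense branch.
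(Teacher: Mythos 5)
Your plan uses the same two ingredients as the paper (aiming at the $\delta$-dense fixed points, correcting via the Lebesgue/covering property), but it organizes them around shadowing the single orbit point, and the step you yourself flag as the main obstacle is exactly where the argument does not close. First, the claimed interior cushion of radius $\delta/\kappa$ around the backward-phase points $y_{k-1}=g_k^{-1}(y_k)$ does not follow from the hypotheses: condition (1) controls $g_k$ only on $D$, so for a point $u\notin D$ with $d(u,y_{k-1})<\delta/\kappa$ nothing relates $g_k(u)$ to $B(y_k,\delta)$, and the covering condition only yields $y_{k-1}\in D$, not that it is uniformly far from $\partial D$ (the target $y$ ranges over a dense subset of $D$, so it may itself be arbitrarily close to $\partial D$). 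Second, your fallback --- making ``$\delta$ small compared to the interior cushion of $y_{K-M}$'' --- is not available: $\delta$ is a fixed datum of the lemma, tied to the Lebesgue number of the given covering $\mathcal{A}$, and cannot be shrunk; moreover, even granting a cushion of size $\delta/\kappa$, your aiming error $R_*\ge (1+\kappa)\delta/(1-\kappa)$ exceeds it whenever $\kappa>\sqrt{2}-1$, so the orbit point is not known to lie in $D$ when the backward-phase maps are applied. Third, the same difficulty already appears inside the aiming phase: the recursion $d(x_i,w_i)\le\kappa\,d(x_{i-1},w_{i-1})+2\kappa\delta$ needs $x_{i-1}\in D$ at every step, and being close to the fixed point $w_i\in D$ does not put $x_i$ in $D$ (nothing in the hypotheses forces $f(D)\subset D$; condition (2) only says the images cover $\overline D$, and in the paper's application to Proposition~\ref{pro ifs 1} they indeed need not stay inside). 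So the ``boundary bookkeeping'' is not a finishing touch; as proposed it cannot be arranged.

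The paper's proof avoids tracking the orbit point altogether: its key claim produces, for every ball $B\subset D$ and every starting index $m$, a block $f\in\F_m^{(n)}$ with $f(D)\subset B$. The preimaging phase is applied to the \emph{target ball}, pulling $B$ back under covering maps until the pulled-back set contains a ball of radius $2\delta$, and only then is the fixed-point phase used to contract \emph{all of} $D$ into that enlarged target; chaining blocks along a countable base is then immediate, because wherever the orbit point sits in $D$ it is carried into the next basis element. This is precisely what removes the mismatch you run into between the fixed $\delta$ and the accuracy demanded of a shadowed single orbit, and it dispenses with any cushion estimates around preimage points. To salvage your scheme you would have to control the image of all of $D$ (or at least of a definite ball around the tracked point) through each phase --- which amounts to proving the paper's claim rather than a pointwise steering statement.
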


\begin{proof}
It is enough to prove the following. 
\begin{claim}\label{af}
Let $B\subset D$ be an open ball. Then there is $n\in \N$ such that for  any $m\in \N\cup\{0\}$ there is  $f\in  \F_m^{(n)}= \F_{m+n}  \circ \dots  \circ \F_{m+1}$ such that $f(D)\in B$.
\end{claim}

\begin{proof}
Assume that $B=B_r(x)$ for some $x\in D$. 
Let $k\in \N$ be  (the smallest integer) larger than $\ln(\delta/{\rm diam}(D)) / \ln(\kappa)$.

(a) If $r \geq 2 \delta$, $B'=B_{\delta}(x)$ be the ball of radius $\delta$ and same center.
Then, it follows from (3) that for any $i\in \N$,  $B'$ contains some $z\in Z_i$. Thus, there is some $f_i\in \F_i$ with a contacting fixed point in $B'$. 
By applying this fact at most $k$ times we see that there is $f\in \F_{m+k} \circ \dots  \circ \F_{n+2} \circ \F_{m+1}$ such that $f(D)\subset B$. In particular, the lemma follows for $n=k$.

(b) If $r<2\delta$, let $k(r)\in \N$ be (the smallest integer) larger than $\ln(r/(2\delta))/\ln(\lambda)$. 
Then, it follows from (2) that for any $i\in\N$, there is some $g_i\in \F_i$ such that $B\subset g_i(D)$. Then by (1),  $g_i^{-1}(B)\subset D$ and it contains a ball of radius $r \lambda^{-1}$.   
By applying this fact at most $k(r)$ times we see that  there is $g\in \F_{i+k(r)} \circ \dots  \circ \F_{i+1}$ such that $g^{-1}(B\cap g(D))$ contains a ball of radius $2\delta$. Thus from (a), for any $m\in N\cup \{0\}$, there is $h\in \F_{m+k} \circ \dots  \circ \F_{m+2} \circ \F_{m+1}$ such that $h(D)\subset g^{-1}(B\cap g(D))$. 
In particular, $(g\circ h)(D) \subset B$. For $i=m+k$,  $n=k+k(r)$ and $f=g\circ h$,  it follows that  $f(D) \subset B$ and  $f \in  \F_{m+n} \circ \dots  \circ \F_{m+2} \circ \F_{m+1}$, completing the proof of claim.
\end{proof}
Now, by considering a countable topological base for $D$ and applying the claim repeatedly one obtains a branch of $\Pgen{{\bf{F}}}(x)$ which is dense in $D$.
\end{proof}

\begin{remark}
The condition (2) and (3) in Lemma \ref{lem sufficient} are equivalent to what are called 
the covering and the well-distributed properties (respectively) in \cite[Definition 2.4]{np}.
\end{remark}

\begin{proof}[Proof of Theorem \ref{thm strong}]
Let $\delta>0$ be a number such that $2\delta$ is smaller that  the Lebesgue number of the covering $\mathcal{A}= \{f(D)\cap \overline{D} ~|~ f\in \F\}$ of $\overline{D}$.

Let $Y_n$ be the set of  fixed points of elements of $\F^n$.
It follows from the main result of \cite{MR625600} that there is $n_0\in \N$ large enough such that for any $n\geq n_0$, $Y_n$ is $(\delta/2)$-dense in $D$. 

It is easy to see that the family $\F\cup \F^{n_0}$ satisfies the hypothesis (1)-(3) of Lemma \ref{lem sufficient}  for some bi-Lipschitz constants $0<\lambda^{1/2}<\kappa^2<1$.
Let $\mathcal{V}$ be a sufficiently small  neighborhood of $\F$, and let $\bf{F}=(\F_1, \F_2, \dots)$ be a sequence of perturbations of $\F$ in $\mathcal{V}$.
Then, for any $m\in \N$, $\F_m^{(n_0)}$ is close to $\F^{n_0}$, and the set of its fixed points (denoted by $Z_m^{(n_0)}$) is $\delta$-dense in $D$.
Consequently, for any $n, m\in \N$, the family $\F_n\cup \F_m^{(n_0)}$ satisfies the hypothesis  (1)-(3) of Lemma \ref{lem sufficient}  for  bi-Lipschitz constants $\lambda$ and $\kappa$.

Now, let $\{B_i\}$ be a topological base of $D$, so that each $B_i$ is a ball.
Fix $i\in \N$, and let   $B=B_i$ be a ball of radius $r$.
By the proof of previous Claim (to apply part (a) we consider the families $\F_i^{(n_0)}$, and to apply part (b) we consider the families $\F_i$), there are non-negative integers $k$ and $k(r)$ such that for any $m\in \N$, 
there is $f\in \F_m^{(n)}$ such that $f(D)\subset B$, where $n=kn_0+ k(r)$. 

Using this statement inductively, one find a dense branch in $D$ for any orbit $\Pgen{{\bf{F}}}(x)$, $x\in D$.
\end{proof}

The following gives a short proof for Proposition \ref{pro ifs 1}.
\begin{proposition}
There exist a family of maps $\F$ as in Proposition \ref{pro ifs 1} such that $\IFS(\F)$ is strongly robustly minimal on some open ball $B_\delta(0)$.
\end{proposition}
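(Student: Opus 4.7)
The plan is to construct $\F=\{\phi+c_1,\dots,\phi+c_k\}$ of the form required by Proposition~\ref{pro ifs 1} and verify directly that, with $D=B_\delta(0)$, it satisfies both hypotheses of Theorem~\ref{thm strong}. Strong robust minimality of $\IFS(\F)$ on $D$ will then follow at once from that theorem, so essentially no work beyond arranging contractivity on $\overline{D}$ and the covering $\overline{D}\subset\F(D)$ remains.

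First I would invoke the fact that $D\phi_0$ is a linear contraction to pick $\kappa<1$ and $\delta_0>0$ for which $\phi$ is $\kappa$-Lipschitz on $\overline{B_{\delta_0}(0)}$. Because translation does not affect the Lipschitz constant, every map $\phi+c$ is automatically $\kappa$-Lipschitz on $\overline{B_\delta(0)}$ for $\delta\le\delta_0$ and any $c\in\R^n$. This disposes of hypothesis~(1) of Theorem~\ref{thm strong} independently of the choice of $c_i$.

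Next I would arrange hypothesis~(2). Since $\phi$ is a diffeomorphism with $\phi(0)=0$, the inverse function theorem supplies a constant $\rho>0$ (depending only on $m(D\phi_0)$) so that $\phi(B_\delta(0))\supset B_{\rho\delta}(0)$ for all sufficiently small $\delta$. Choosing $c_1,\dots,c_k$ to be a $\rho\delta$-net of $\overline{B_\delta(0)}$ then gives
\[
 \overline{B_\delta(0)} \subset \bigcup_{i=1}^{k}\bigl(B_{\rho\delta}(0)+c_i\bigr) \subset \bigcup_{i=1}^{k}\bigl(\phi(B_\delta(0))+c_i\bigr) = \F(B_\delta(0)).
\]
The cardinality $k$ is bounded in terms of $\rho$ and the dimension only, while $|c_i|\le\delta$. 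Hence, given any prescribed $\varepsilon>0$, one has only to take $\delta<\min\{\delta_0,\varepsilon\}$ to ensure $c_i\in B_\varepsilon(0)$, and Theorem~\ref{thm strong} then yields the conclusion.

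The main obstacle, at least conceptually, is reconciling the two hypotheses simultaneously: the covering forces the $c_i$ to spread over a region of size comparable to $D$, while the proposition requires them to lie in a ball of arbitrarily small radius. The resolution is the homogeneity of all the relevant quantities in $\delta$: the radius of $D$, the inradius of $\phi(D)$, and the required net spacing all scale linearly in $\delta$, so shrinking $\delta$ shrinks the translations $c_i$ proportionally while leaving $k$ and the covering property intact.
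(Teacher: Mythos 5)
Your argument is correct and is essentially the paper's own proof: the paper fixes $\lambda$ with $B_\lambda(0)\subset D\phi_0(B_1(0))$, covers the closed unit ball by $k$ balls of radius $\lambda$ centered at points $b_i$, rescales to get $\overline{B_\delta(0)}\subset\bigcup_i\bigl(\phi(B_\delta(0))+\delta b_i\bigr)$ for small $\delta$, and applies Theorem~\ref{thm strong} with $c_i=\delta b_i$, which is exactly your net-and-scaling construction phrased at scale $1$ instead of scale $\delta$. Your only addition is the explicit verification of the contraction hypothesis (1), which the paper leaves implicit.
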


\begin{proof}
Let $\lambda>0$ such that  $B_{\lambda}(0) \subset D\phi_0 (B_1(0))$.
Then, we consider a cover of the closed unit ball of $\R^n$ by $k$ balls of radius $\lambda$, i.e.,
$$\overline{B_1(0)}\subset \bigcup_{i=1}^k B_{\lambda}(b_{i}) = \bigcup_{i=1}^k (B_{\lambda}(0) + b_{i}).$$ 
It follows that for $\delta>0$ small enough, 
$$\overline{B_\delta(0)}\subset \bigcup_{i=1}^k (\phi(B_{\delta}(0)) + \delta b_{i}).$$ 
To complete the proof, it is enough to apply the previous theorem for $D=B_{\delta}(0)$ and the family $\F=\{\phi + c_1, \dots, \phi + c_{k}\}$, where $c_i=\delta b_{i}$.
\end{proof}

\subsection{End of proof of Theorem~\ref{thm B}}
\begin{proof}[Proof of Theorem~\ref{thm B}]
It is easy to see that the pair of diffeomorphism (in both proofs) of Theorem \ref{thm A} satisfies the hypothesis of Theorem \ref{thm strong} and so it generates a strongly robustly minimal IFS.
\end{proof}


\section{Skew products and robust transitivity}\label{s:skew}

Following ideas developed in \cite{np}, we discuss relations between 
robustly minimal iterated function systems constructed in the previous sections
and blenders.
We demonstrate how, through these relations, the minimal iterated
function systems provide elementary constructions of  
robustly transitive skew products.
More precisely, we show how 
the constructions in this paper
provide an alternative construction for
one of the main results from 
\cite{MR1381990} on robustly transitive skew products
(see Theorem~\ref{t:B} below).

The material in the previous sections is readily translated 
to skew product systems of diffeomorphisms over shift maps; 
a blending region will give rise to a symbolic blender.
Blenders are certain hyperbolic invariant sets 
used in the construction of robustly transitive dynamics,
as explored in \cite{MR1381990}
(see also \citep{bdv,bondia12}).
The original notion of blenders is in a context of partial hyperbolicity with
one dimensional central directions.
The generalization of blenders to higher dimensional central directions
was first considered in \citep{thesis,np}, where symbolic blenders
and their geometric models are introduced and applied to 
robustly transitive dynamics in symplectic settings.
A further study of their geometric and dynamical properties 
is in \cite{barkirai12}. 

\subsection{Symbolic blenders}

Let $g_1,g_2$ be two diffeomorphisms on $M$ generating a robustly 
minimal iterated function system
as in the previous sections.
For reasons of definiteness and clarity we assume the 
properties listed in Section~\ref{s:proof}.
Write $h_1 = g_1$, $h_2 = g_1 \circ g_2$ and let $\mathcal{H} = \{ h_1 , h_2 \}$. 
Recall that $\rm{IFS}\,(\mathcal{H})$ acts minimally on a set $\Delta$.
More specific, there are open sets $E_{in} \subset \Delta \subset E_{out}$ on which
\begin{equation}\label{e:ifsinclusion}
E_{in} \subset \mathcal{H}(E_{in}) \subset \Delta \subset \mathcal{H}(E_{out}) \subset E_{out}
\end{equation}
and $h_1$ and $h_2$ are contractions on $E_{out}$.

With $\Sigma = \{1,2\}^\mathbb{Z}$, let $H_0$ on $\Sigma \times M$ be defined by
\begin{align}\label{e:H+}
H_0( \omega , y) = (\sigma \omega , h_{\omega_0} (y)).
\end{align}
where $(\sigma \omega)_k = \omega_{k+1}$ is the left shift operator.
For the one-sided symbol space $\Sigma_+ = \{ 1,2\}^\mathbb{N}$,
a map $H_{0,+}$ on $\Sigma_+ \times M$ is defined likewise, using the same formula. 
The symbol spaces $\Sigma_+$ and $\Sigma$ will be endowed with the product topology,
and we write $\Sigma = \Sigma_- \times \Sigma_+$.

A natural situation is where $\Sigma$ occurs, 
through a topological conjugacy, 
as an invariant set of a diffeomorphism.
This gives sense to notions of (partial) hyperbolicity and of 
invariant manifolds such as stable manifolds.
It will in fact be the context of our applications on robustly 
transitive skew product systems.  
Henceforth, we work in the setup of diffeomorphisms 
$f:N \to N$ on a compact manifold
$N$ possessing a maximal hyperbolic invariant set in an open $U \subset N$ on which
$f$ is topologically conjugate to $\sigma$ on $\Sigma$.
We consider skew product systems $(x,y) \mapsto (f(x) , g_y(x))$ on $N \times M$;
write $\mathcal{S}^1 (N,M)$ for the set of such diffeomorphisms endowed with the 
$C^1$ topology.
Note that $C^1$ small
perturbations of $f$ yield a perturbed hyperbolic set in $U$
on which the dynamics remains
topologically conjugate to $\sigma$ acting on $\Sigma$.
For convenience, 
as we restrict diffeomorphisms to their hyperbolic sets,
we write $\mathcal{S}^1(\Sigma, M)$ for the $C^1$ skew product maps
$H : \Sigma \times M \to \Sigma \times M$,
\begin{align}
 H(\omega , y) = (\sigma \omega , h_\omega (y)),
\end{align}
endowed with the 
$C^1$ topology in the sense just described.

Suppose $H$ is partially hyperbolic on $\Sigma \times M$ in the 
sense that contraction and expansion rates of $\sigma$ dominate those
of $y \mapsto h_{\omega} (y)$.
That is, with $E^s \oplus E^u$ denoting the splitting in 
stable and unstable directions for $\sigma$,  
\begin{align*}
 |D\sigma(\omega)|_{E^s_\omega}| < m (Dh_\omega(y)) \le |Dh_\omega (y)| 
< m(D\sigma(\omega)|_{E^u_\omega}) 
\end{align*}
for each $y\in M$ and $\omega \in \Sigma$.
The partially hyperbolic skew product systems in $\mathcal{S}^1(\Sigma,M)$ form
an open subset of $\mathcal{S}^1(\Sigma,M)$; we may assume
that $H_0$ is partially hyperbolic.
We refer to \citep{bdv} for more on partial hyperbolicity.
As a consequence of partial hyperbolicity, 
there are strong stable and strong unstable manifolds of points in $\Sigma \times M$
that project to the stable and unstable manifolds of $\sigma$ under 
the projection to the base space $\Sigma$.
These strong stable manifolds provide a strong stable 
lamination $\mathcal{F}^{ss}$ 
of $\Sigma \times M$.
For definiteness, a local strong stable manifold is a compact part of
a strong stable manifold that projects to some
 $\Sigma_- \times \{ \omega_+\}$ under 
the projection to the base space $\Sigma$.
Likewise, strong unstable manifolds of points in $\Sigma \times M$ 
provide a strong unstable 
lamination $\mathcal{F}^{uu}$ of $\Sigma \times M$.
Local strong unstable manifolds project to $\{\omega-\} \times \Sigma_+$ 
under the projection to the base space $\Sigma$.
A strong stable or unstable lamination of a set 
is called minimal if each of its leaves lies dense in the set.

The maximal invariant set $B$ of $H$ in $\Sigma \times E_{out}$ 
in the following proposition is called a symbolic blender \cite{np}.
The local unstable set of $B$ is to be interpreted as
the union of local strong unstable manifolds through points in $B$.

\begin{proposition}\label{p:blender}
Any skew product map $H \in \mathcal{S}^1 (\Sigma,M)$
sufficiently close to $H_0 (\omega,m) = (\sigma\omega , h_{\omega_0} (m))$
possesses a maximal  invariant set $B \subset \Sigma \times E_{out}$ with the following properties:
\begin{itemize}
 \item $H$ is topologically mixing on $B$,
 \item the strong unstable lamination of $B$ is minimal, 
 \item any local strong stable manifold inside $\Sigma \times E_{in}$ 
        intersects the local unstable set of $B$. 
\end{itemize}
\end{proposition}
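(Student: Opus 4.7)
The plan is to realize $B$ as a graph over $\Sigma$ on which $H$ acts conjugately to the shift, and then to read off the three properties from standard facts about the full shift combined with the IFS covering structure on fibers. Because each $h_\omega$ is a uniform contraction of $\overline{E_{out}}$ (the contractions of $h_1,h_2$ and the inclusion $\mathcal{H}(E_{out}) \subset E_{out}$ are $C^1$-robust), the backward expansion forces a unique fiber value
\[
\pi_H(\omega) \,=\, \lim_{n \to \infty} h_{\sigma^{-1}\omega} \circ h_{\sigma^{-2}\omega} \circ \cdots \circ h_{\sigma^{-n}\omega}(y_0),
\]
independent of $y_0 \in E_{out}$ and continuous in $\omega$. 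Forward invariance of $\Sigma \times E_{out}$ is automatic, so $B = \{(\omega, \pi_H(\omega)) : \omega \in \Sigma\}$, and the base projection conjugates $H|_B$ to $\sigma$ on $\Sigma = \{1,2\}^{\mathbb{Z}}$.

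With this identification, the first bullet is immediate since the full shift is topologically mixing. For the second bullet, the local strong unstable manifold through $p=(\omega,\pi_H(\omega)) \in B$ is exactly $\{(\omega', \pi_H(\omega')) : \omega'_- = \omega_-\}$ and lies entirely in $B$, so in particular $W^{uu}_{loc}(B) = B$. Under the conjugacy, the grand unstable leaf through $p$ corresponds to the grand unstable set $W^u_\sigma(\omega) = \{\omega' : \omega'_i = \omega_i \text{ for all sufficiently negative } i\}$, which is dense in $\Sigma$: given any cylinder specifying $\omega'_i = a_i$ for $i \in [-k,k]$, set $\omega'_i = \omega_i$ for $i \leq -k-1$, $\omega'_i = a_i$ on the window, and arbitrary entries for $i > k$. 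Continuity of $\pi_H$ transports density to $B$.

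For the third bullet, a local strong stable manifold $L \subset \Sigma \times E_{in}$ through $(\omega_0,y_0)$ is a graph $\{(\omega',\psi(\omega')) : \omega'_+ = \omega_+^0\}$ with $\psi : \Sigma_- \to E_{in}$ continuous. For $H_0$ the strong stable distribution is the base direction exactly, so $\psi \equiv y_0$; for $H$ nearby, standard partial-hyperbolicity theory makes $\psi$ a $C^1$-small perturbation of a constant. Using $W^{uu}_{loc}(B) = B$, the claim reduces to finding $\omega_-^* \in \Sigma_-$ with $\psi(\omega_-^*) = \pi_H(\omega_-^*, \omega_+^0)$. The tool is the $C^1$-robust covering $E_{in} \subset \mathcal{H}(E_{in})$: given any $y \in E_{in}$, inductively pick $i_k \in \{1,2\}$ and $y_k \in E_{in}$ with $y_{k-1} = h_{i_k}(y_k)$, so that $\omega_- = \ldots i_2 i_1$ satisfies $\pi_H(\omega_-,\omega_+^0) = y$. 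For $H = H_0$ one takes $y = y_0$ and is done immediately. For perturbed $H$, define the coding map $T : \Sigma_- \to \Sigma_-$ sending $\omega_-$ to the code produced from $\psi(\omega_-)$ and show that, in the standard ultrametric on $\Sigma_-$, $T$ is a contraction; its fixed point is the desired $\omega_-^*$.

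The main obstacle will be this last fixed-point step in the perturbed setting. At each backward stage the IFS preimage construction requires a choice whenever the overlap $h_1(E_{in}) \cap h_2(E_{in})$ is entered, and one must pin down a continuous (single-valued) selection so that $T$ is well-defined and uniformly contracting. Quantitatively the contraction ratio of $T$ is comparable to the oscillation of $\psi$, which is $O(\|H-H_0\|_{C^1})$, divided by the branch-refinement factor of the IFS; so the neighborhood of $H_0$ must be chosen small enough to make this product strictly less than one.
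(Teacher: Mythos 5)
Your graph description of $B$ is correct and handles the first two bullets by a route genuinely different from the paper's: since the fiber maps are uniform contractions of $\overline{E_{out}}$ into $E_{out}$ (robustly), the maximal invariant set is indeed the graph of $\pi_H$, the base projection conjugates $H|_B$ to the full shift, and minimality of the unstable lamination follows from density of unstable sets of $\sigma$ once one notes (as you should, but it is a short argument) that local strong unstable manifolds of points of $B$ have their full orbits in $\Sigma\times E_{out}$ and hence lie in $B$. The paper never isolates the graph structure; it argues instead with images of $\Sigma_+\times E_{in}$ being unions of strips of shrinking diameter that lie increasingly dense, so your picture for bullets one and two is cleaner, though it must still appeal to the strong stable/unstable lamination theory the paper invokes.

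The genuine gap is in the third bullet, which is the actual blender property and the only place the covering $E_{in}\subset h_1(E_{in})\cup h_2(E_{in})$ enters. Your fixed-point scheme cannot be completed as proposed. First, the coding map is circular for perturbed $H$: to decide which branch preimages of $\psi(\omega_-)$ to take you must use the fiber maps $h_{\sigma^{-k}\omega}$, which depend on the very past you are trying to construct; coding with the unperturbed $h_1,h_2$ only solves $\psi(\omega_-^*)=\pi_{H_0}(\omega_-^*,\omega_+^0)$, not the equation for $\pi_H$. Second, and more fundamentally, the ``continuous single-valued selection'' you hope for does not exist: a coding $c:E_{in}\to\Sigma_-$ with $\pi_H(c(y),\omega_+^0)=y$ maps the connected set $E_{in}$ into the totally disconnected $\Sigma_-$, so if it were continuous it would be constant; any actual selection jumps in its first symbol across the boundary of the overlap $h_1(E_{in})\cap h_2(E_{in})$, so for pasts $\alpha,\beta$ with $\psi(\alpha),\psi(\beta)$ straddling that boundary, $T\alpha$ and $T\beta$ differ already in the first entry no matter how small the oscillation of $\psi$ is — $T$ is not even continuous, let alone a uniform contraction, and shrinking the neighborhood of $H_0$ does not help. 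The repair is to work with sets rather than point codings: using the robust covering property, build a nested sequence of nonempty compact cylinder sets of pasts $\alpha$ for which $\psi(\alpha)$ lies in the image of $E_{in}$ under the depth-$n$ composition of the (perturbed) branch maps, and take a point in the intersection; equivalently, as the paper does, project the dynamics along the strong stable lamination to $\Sigma_+\times E_{out}$, observe that the projected images of $\Sigma_+\times E_{in}$ keep covering $\Sigma_+\times E_{in}$, and conclude that the local unstable set of $B$ projects onto a set containing $\Sigma_+\times E_{in}$, so every local strong stable leaf through $\Sigma\times E_{in}$ meets it. No selection or contraction of a coding map is needed.
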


\begin{proof}
The image $H_{0,+}^{k} (\Sigma_+ \times E_{in})$ contains $2^k$ strips 
$\Sigma_+ \times U_i \subset \Sigma_+ \times E_{out}$ 
with diameter of $U_i$ going to $0$ as $k \to \infty$ 
and together covering $\Sigma_+ \times E_{in}$. 
Take an open set $U\subset \Sigma_+ \times \Delta$. A high iterate $H_{0,+}^n (U)$ 
contains a strip $\Sigma_+ \times J$  
in $\Sigma_+ \times \Delta$. 
By the above description of iterates $H_{0,+}^{k} (\Sigma_+ \times E_{in})$,
we see that further iterates $H_{0,+}^{n+k} (U)$ 
contain strips in $\Sigma_+ \times \Delta$,
that lie increasingly dense in it as $k \to \infty$. 

This reasoning also applies to small perturbations $H_+$ 
of $H_{0,+}$, where also
the fiber maps may depend on all of $\omega$ instead of just
$\omega_0$ (for one dimensional central directions this is also pursued in \cite{ajh11}).
Suppose $H_+(\omega,x) = (\sigma \omega , h_\omega (x))$ is such that
$h_\omega$ depends continuously on $\omega$ and is uniformly close to $h_{\omega_0}$. 
We note the following changes in the reasoning.
The inclusions \eqref{e:ifsinclusion} get replaced by
\begin{equation}\label{e:tildeH+}
\Sigma \times E_{in} \subset H_+ (\Sigma_+ \times E_{in}), \qquad
  H_+(\Sigma_+ \times E_{out}) \subset \Sigma_+ \times E_{out}
\end{equation}
An iterate $H^n_+$ maps $\Sigma_+ \times E_{out}$ to $2^n$ strips $R^i$.
The diameter of a strip is the maximal real number $r$ so that each 
$R_i \cap \{\omega \} \times E_{out}$
is contained in a ball of radius $r$. 
The map $H_+$ on $\Sigma_+ \times E_{out}$ acts by contractions 
in the fibers $\{\omega\}\times E_{out}$.
Hence $H_+^n(\Sigma_+ \times E_{in})$ are $2^n$ strips
with diameter going to zero as $n\to \infty$ and together
covering $\Sigma_+ \times E_{in}$.
Iterates under $H_+$ of  $\Sigma_+ \times E_{in}$ or
$\Sigma_+ \times E_{out}$ therefore converge to an 
invariant set that contains $\Sigma_+ \times E_{in}$.
As above, for any open set $U$,
$H^{n+k}_+ (U)$ contain strips
lying increasingly dense in the invariant set of $H_+$ as $k$ increases.

We proceed with skew products over the shift operator on two sided symbol spaces. 
For $H$ $C^1$-close to $H_0$, there is a strong stable foliation close to the affine foliation, 
i.e. with leaves close to $\Sigma_- \times \{ \omega_+\} \times \{m\}$. 
Write $\pi^+: \Sigma \times E_{out} \to \Sigma_+ \times E_{out}$ for the continuous
projection along local strong stable manifolds.
The existence of a strong stable foliation that is close to the affine foliation
means that $\pi^+$ is $C^0$-close to the identity.

We can copy the previous reasoning.
Observe that $H^n$ maps a curve $\{\omega_-\} \times \Sigma_+ \times \{m\}$
to $2^n$ curves that are each  a graph of a 
map $\Sigma_+ \to \Sigma_- \times E_{out}$.
Likewise  $H^n$ maps $\Sigma \times E_{out}$ to $2^n$ strips $R^i$.
Since $\pi^+$ is close to the identity, \eqref{e:tildeH+} gets replaced by
\begin{equation*}
\Sigma \times E_{in} \subset \pi^+ H (\Sigma_+ \times E_{in}), \qquad
  \pi^+ H (\Sigma_+ \times E_{out}) \subset \Sigma_+ \times E_{out}.
\end{equation*}
Invariance of the strong stable foliation gives that these inclusions also hold for
iterates of $H$.
Further, by uniform $C^1$-closeness of $h_\omega$ to $h_{\omega_0}$:
\begin{equation*}
 \lim_{i\to \infty}
\text{diameter}(R_i) = 0. 
\end{equation*}
The diameter of a strip, as before, is the maximal real number $r$ so that each 
$R_i \cap \{\omega\} \times E_{out}$
is contained in a ball of radius $r$. 
This proves the proposition. 
\end{proof}

\subsection{Robustly transitive skew product diffeomorphisms}

As an application of symbolic blenders from Proposition~\ref{p:blender} 
we show how it gives constructions of robustly transitive diffeomorphisms,
obtaining a main result in \cite{MR1381990} in a straightforward manner.

Let $N,M$ be compact manifolds and let $f: N\mapsto N$
be a diffeomorphism with a compact hyperbolic 
locally maximal invariant set 
$\Lambda_f$,
on which $f$ is topologically mixing.
Below we will use the fact that unstable manifolds for $f$ of points 
in $\Lambda_f$ lie dense in $\Lambda_f$, see e.g.
\citep[Section~18.3]{kathas97}.
Recall that a $C^1$ small perturbation 
$\tilde{f}$ of $f$ possesses a hyperbolic attractor $\Lambda_{\tilde{f}}$
near $\Lambda_f$. Moreover, $\tilde{f}$ restricted to $\Lambda_{\tilde{f}}$
is topologically conjugate to $f$ restricted to $\Lambda_f$.
Let $R_1, \dots, R_k$ be a Markov partition for $\Lambda_f$, through which, $f|_{\Lambda_f}$ is conjugate by $\varphi$ to the full shift $\sigma:\Sigma \to \Sigma$ with $k$ symbols.

\begin{theorem}[\cite{MR1381990}]\label{t:B}
There is a diffeomorphism $F: N\times M \to N\times M$, $F (x,y) = (f(x),g(x,y))$
with $f$ as above, 
that is topologically mixing on $\Lambda_f \times M$.
Moreover, $F$ is robustly topologically mixing in $\mathcal{S}^1(N\times M)$; i.e.
there is an open neighborhood $U$ of $F$ in $\mathcal{S}^1(N\times M)$ so that 
each $\tilde{F}(x,y) = (\tilde{f}(x),\tilde{g}(x,y))$ from $U$
is topologically mixing on $\Lambda_{\tilde{f}} \times M$.
\end{theorem}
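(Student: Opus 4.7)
The plan is to build the skew product $F$ so that the fiber dynamics over the Markov rectangles of $\Lambda_f$ realize the robustly minimal pair $\{h_1,h_2\}=\{g_1,\,g_1\circ g_2\}$ of Section~\ref{s:proof}, then invoke Proposition~\ref{p:blender} to extract a symbolic blender, and finally combine the capture property of the blender with topological mixing of $f$ to establish $C^1$-robust topological mixing of $F$ on $\Lambda_f\times M$. Fix a surjection $\epsilon:\{1,\dots,k\}\to\{1,2\}$ and set $g(x,y)=h_{\epsilon(i)}(y)$ in a neighborhood of each Markov rectangle $R_i$; smoothing across boundaries and extending arbitrarily off a neighborhood of $\Lambda_f$ gives a $C^1$ diffeomorphism $F(x,y)=(f(x),g(x,y))$. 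Conjugating by $\Phi=\varphi\times\mathrm{Id}$, $F|_{\Lambda_f\times M}$ becomes the skew product $(\omega,y)\mapsto(\sigma\omega,h_{\epsilon(\omega_0)}(y))$ of the type~\eqref{e:H+}, so Proposition~\ref{p:blender} supplies, for every $\tilde F=(\tilde f,\tilde g)$ in some $C^1$-neighborhood $\mathcal W\subset\mathcal S^1(N\times M)$ of $F$, a maximal invariant set $\tilde B\subset\Lambda_{\tilde f}\times E_{out}$ that is topologically mixing on itself, has minimal strong unstable lamination, and whose local unstable set meets every local strong stable manifold sitting inside $\Lambda_{\tilde f}\times E_{in}$.

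For the mixing step, fix $\tilde F\in\mathcal W$ and nonempty open sets $\mathcal U,\mathcal V\subset\Lambda_{\tilde f}\times M$. Density of unstable manifolds of $\tilde f$ in $\Lambda_{\tilde f}$ together with topological mixing of $\tilde f$ yields an $n_1$ such that $\tilde F^{n_1}(\mathcal U)$ contains a local strong unstable disk $W^{uu}_{loc}(p)$ with $p\in\Lambda_{\tilde f}\times E_{out}$; minimality of $\mathcal F^{uu}|_{\tilde B}$ then forces the forward iterates $\tilde F^{n_1+k}(W^{uu}_{loc}(p))$ to become arbitrarily dense in $\tilde B$ as $k\to\infty$. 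Symmetrically, there is $n_2$ such that $\tilde F^{-n_2}(\mathcal V)$ contains a local strong stable manifold $W^{ss}_{loc}$ inside $\Lambda_{\tilde f}\times E_{in}$. The blender's third property forces $W^{ss}_{loc}$ to meet the local unstable set of $\tilde B$, so by the preceding density it meets $\tilde F^{n_1+k}(W^{uu}_{loc}(p))$ for all sufficiently large $k$. Pushing the intersection forward by $\tilde F^{n_2}$ shows $\tilde F^{n}(\mathcal U)\cap\mathcal V\neq\emptyset$ for all $n\geq N_0$, i.e., $\tilde F$ is topologically mixing on $\Lambda_{\tilde f}\times M$.

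The main obstacle is this last argument: the blender's defining property is purely local (matching of stable and unstable manifolds in a fixed neighborhood), so one must use the hyperbolicity of $\tilde f$ on the base to route an unstable leaf of an iterate of $\mathcal U$ and a stable leaf of a backward iterate of $\mathcal V$ into that neighborhood simultaneously. Once both are present, Proposition~\ref{p:blender} does the matchmaking. Robustness is automatic because every ingredient---density of invariant manifolds of $\tilde f$ in $\Lambda_{\tilde f}$, the Markov conjugacy with $\sigma$, and the blender $\tilde B$ itself---is $C^1$-stable, yielding the open neighborhood $\mathcal W$ of $F$ on which topological mixing holds.
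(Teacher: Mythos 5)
There is a genuine gap, and it is in the very first step: your choice of fiber maps. You put $h_{\epsilon(i)}\in\{h_1,h_2\}=\{g_1,\,g_1\circ g_2\}$ over \emph{every} Markov rectangle, so that on $\Lambda_f\times M$ the fiber dynamics is generated entirely by the pair $\{g_1,g_1\circ g_2\}$. But this pair is precisely the contracting pair used to build the blender: by \eqref{e:ifsinclusion} both maps send $E_{out}$ into $E_{out}$ and are contractions there, and $\rm{IFS}\,(\mathcal{H})$ is minimal only on the attractor $\Delta$, not on $M$ (one cannot recover $g_2$ from $g_1$ and $g_1\circ g_2$ inside the semigroup). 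Consequently $\Lambda_f\times E_{out}$ is forward invariant for your $F$: taking $\mathcal{U}\subset\Lambda_f\times E_{in}$ and $\mathcal{V}=\Lambda_f\times V_0$ with $V_0\cap\overline{E_{out}}=\emptyset$, one has $F^{n}(\mathcal{U})\subset\Lambda_f\times E_{out}$ for all $n\geq 0$, so $F$ is not even transitive on $\Lambda_f\times M$, let alone robustly mixing. This is also exactly where your mixing argument breaks: the claim that $\tilde F^{-n_2}(\mathcal{V})$ contains a local strong stable manifold inside $\Lambda_{\tilde f}\times E_{in}$ is equivalent to a forward semigroup orbit of a point of $E_{in}$ reaching the fiber part of $\mathcal{V}$, which is impossible when that fiber part avoids $E_{out}$. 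The blender only does local matchmaking; the global spreading must come from a fiber IFS that is (strongly robustly) minimal on all of $M$.

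The paper avoids this by taking the fiber maps over the two Markov boxes of the horseshoe to be $g_1$ and $g_2$ themselves (the pair that generates a robustly minimal IFS on $M$, thanks to the globalization step of Section~\ref{s:proof}); the blender of Proposition~\ref{p:blender} then appears for $F$ through the induced words $g_1$ and $g_1\circ g_2$, i.e.\ along suitable symbolic itineraries, not as the literal generators. The mixing argument is then run through the fixed points: $Q_2=(2^\infty,q_2)$ is repelling in its central fiber, so $W^u(Q_2)$ is dense in $\Lambda_f\times M$; the blender forces $W^u(Q_2)\subset\overline{W^{uu}(P_1)}$, hence $W^{uu}(P_1)$ is dense; and since the basin of $p_1$ under $g_1$ is open and dense, every open set meets $W^s(P_1)$ and its iterates accumulate on $W^{uu}(P_1)$, giving mixing, robustly. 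Your second-paragraph scheme (push an unstable disk of $\mathcal{U}$ and a stable leaf of $\mathcal{V}$ into the blender region and let the blender match them) is the right spirit for the local part, but to make it work you must keep $g_2$ (not only $g_1\circ g_2$) among the fiber generators so that backward/forward orbits can actually leave and re-enter $E_{out}$ and reach arbitrary fiber regions; as written, the construction itself does not satisfy the theorem.
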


\begin{proof}
Let $\Omega_f \subset \Lambda_f$ be a Smale horseshoe for an 
iterate $f^k$ of $f$, i.e. an invariant set for $f^k$ on which $f^k$ 
is topologically conjugate to the shift $\sigma$ on the symbol space 
$\Sigma = \{ 1,2 \}^\mathbb{Z}$ (see e.g. \cite[Theorem~6.5.5]{kathas97}).
For simplicity we assume $k=1$ for now.
With the Smale horseshoe comes a Markov partition of two sets $U_1,U_2$ 
covering $\Omega_f$, for which $\Omega_f$ is the maximal invariant set 
in $U_1\cup U_2$.  

Consider a skew product system $(x,y) \mapsto (f (x) , g(x,y))$
with the following properties:
\begin{enumerate}
 \item through the conjugation of $f|_{\Omega_f}$ with $\sigma$, 
the skew product system on $\Omega_f \times M$ is given by 
\begin{align} 
 G( \omega , y) &= (\sigma \omega , g_{\omega_0} (y)),
\end{align}
where $g_1,g_2$ are as in Section~\ref{s:proof}, 
 \item for $(x,y) \in U_i\times M$,  $i=1,2$, $g(x,y)$ does not depend on $x$.
\end{enumerate}
Such a skew product system exists as $g_1$ and $g_2$ are isotopic to the identity.

We now consider the diffeomorphism $F$ restricted to $\Omega_f \times M$. 
The repelling fixed point $q_2$ of $g_2$ in $\Delta$
gives a fixed point $(2^\infty , q_2)$ for $G$
that is repelling in $\{ 2^\infty\} \times M$.
This corresponds to a fixed point $Q_2$ for $F$ that is repelling 
within its central fiber. 
Similarly $F$ has a fixed point $P_1$ coming from the fixed point 
$(1^\infty, p_1)$ for $G$,
this fixed point is contracting within its central fiber.
Note that $W^u(Q_2)$ is dense in $\Lambda_f \times M$,
since unstable manifolds for $f$ are dense in $\Lambda_f$.  
We claim that 
\begin{align*}
 W^u (Q_2) &\subset \overline{W^{uu}(P_1)},
\end{align*}
establishing that $W^{uu}(P_1)$ lies dense in $\Lambda_f \times M$.
Namely, take a point $q \in W^u(Q_2)$ and a neighborhood $V$ of it,
iterate backwards and note that $H^{-m} (V)$ intersects $W^{uu}(P_1)$
by the existence of a symbolic blender of $F$ on $\Omega_f \times M$
(compare \citep[Lemma~1.9]{MR1381990}).
Density of $W^{uu}(P_1)$ implies that $H$ is topologically mixing 
on $\Lambda_f \times M$,
since iterates of an open set intersect the local stable manifold of $P_1$ 
and thus accumulate onto $W^{uu}(P_1)$.
This construction is robust under perturbations of $F$, 
see also Proposition~\ref{p:blender},
proving that $F$ is robust topologically mixing.
\end{proof}


\subsection{Sufficient conditions for robust transitivity}
Let $f$ be as in the previous section.
First we state a simple application of the well known results of \cite{hps}.

\begin{lemma}[Normal hyperbolicity]\label{lem hps}
Let $F: N\times M \to N\times M$ be a skew-product diffeomorphism such that $F(x,y) = (f(x),g(x,y))$
with $f$ as above, and such that $g(x,\cdot)$ is uniformly dominated by $f|_{\Lambda_f}$
Then, for any $G$ in a $C^1$ neighborhood of $F$ in ${\rm{Diff}}(M\times N)$ the following holds:   
\begin{itemize}
\item There is a unique continuation $\Gamma_{G}$ of $\Gamma_F:=\Lambda_f\times M$,
\item $G|_{\Gamma_{G}}$ is conjugate by $\phi_G$ to the $C^1$ skew product $H_G$ on $ \Sigma\times M$:
$$(\omega, y) \mapsto (\sigma(\omega),h_{G,\omega}(y) )$$
\item for any $\omega \in \Sigma$, $\phi_G^{-1}(\{\omega\}\times M)$ is diffeomorphic to $M$ and it is $C^1$ close to 
$\phi_F^{-1}(\{\omega\}\times M)= \{\varphi^{-1}(\omega)\}\times M$.
\end{itemize}
\end{lemma}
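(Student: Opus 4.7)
The plan is to obtain Lemma~\ref{lem hps} as a direct application of the Hirsch--Pugh--Shub persistence theorem for normally hyperbolic invariant laminations \cite{hps}. The first step is to identify the normally hyperbolic structure on $\Gamma_F=\Lambda_f\times M$. The hyperbolic splitting $T_{\Lambda_f}N = E^s\oplus E^u$ for $f$ lifts to an $F$-invariant splitting $E^s\oplus TM\oplus E^u$ of $T_{\Gamma_F}(N\times M)$, with $TM$ as the central direction tangent to the fibres $\{x\}\times M$. The hypothesis that $g(x,\cdot)$ is uniformly dominated by $f|_{\Lambda_f}$ is precisely the $1$-normal hyperbolicity condition, so $\Gamma_F$ is a compact normally hyperbolic invariant lamination with central foliation $\{\{x\}\times M : x\in \Lambda_f\}$.

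Next I would invoke \cite{hps}: there is a $C^1$-neighbourhood $\mathcal{U}$ of $F$ in $\mathrm{Diff}^1(N\times M)$ such that every $G\in\mathcal{U}$ admits a unique maximal invariant set $\Gamma_G$ near $\Gamma_F$, together with a $G$-invariant central lamination $\mathcal{L}_G$ whose leaves are $C^1$-close to the unperturbed fibres $\{x\}\times M$. Each leaf is $C^1$-diffeomorphic to $M$ and the leaves depend continuously, in the $C^1$-topology, on their base point. This already delivers the first bullet and the $C^1$-closeness asserted in the third. To assemble these leaves into a conjugacy with a skew product over $\sigma$, I would first collapse $\Gamma_G$ along $\mathcal{L}_G$: the resulting factor map is topologically close to $f|_{\Lambda_f}$, and structural stability of the hyperbolic basic set $\Lambda_f$ supplies a topological conjugacy of this factor to $f|_{\Lambda_f}$, which I post-compose with the Markov conjugacy $\varphi$ to obtain a conjugacy $\psi_G$ to $\sigma$ on $\Sigma$. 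Writing $L_\omega$ for the leaf of $\mathcal{L}_G$ over $\psi_G^{-1}(\omega)$, I would parametrise $L_\omega$ by $M$ via projection along $TM$ onto the unperturbed fibre, and declare $\phi_G(\omega,y)$ to be the unique point of $L_\omega$ whose projection is $(\psi_G^{-1}(\omega),y)$.

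Invariance of $\mathcal{L}_G$ under $G$ forces leaves to be mapped to leaves, so in $\phi_G$-coordinates the dynamics takes the skew-product form $(\omega,y)\mapsto(\sigma\omega,h_{G,\omega}(y))$, with each $h_{G,\omega}$ of class $C^1$ by the fibrewise smoothness of $G$ and the $C^1$-regularity of the leaves of $\mathcal{L}_G$. The only nontrivial obstacle I anticipate is verifying that the assignment $\omega\mapsto L_\omega$ is continuous in a strong enough sense that $\phi_G$ is a genuine homeomorphism and the fibre maps $h_{G,\omega}$ depend continuously on $\omega$; this follows from the continuity of the lamination $\mathcal{L}_G$ provided by \cite{hps}, combined with the continuity of the topological conjugacy $\psi_G$ coming from structural stability.
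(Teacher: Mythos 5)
Your argument is exactly the route the paper intends: the paper offers no written proof of Lemma~\ref{lem hps} beyond calling it a direct consequence of \cite{hps}, and your proposal is the standard filled-in version of that, namely persistence of the normally hyperbolic lamination of $\Lambda_f\times M$ by the compact fibres $\{x\}\times M$, leaf conjugacy of the induced base dynamics with $f|_{\Lambda_f}\cong\sigma$, and parametrisation of the $C^1$-close perturbed leaves by $M$ to obtain the skew-product form. The only quibble is the phrase ``projection along $TM$ onto the unperturbed fibre'': you mean projecting along the directions transverse to the fibre (equivalently, using that each perturbed leaf is a $C^1$ graph over $M$), which is clearly your intent and does not affect correctness.
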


\begin{definition}
If $F$ satisfies the previous lemma, then we say $F$ is {\emph{robustly transitive on $\Gamma_F$}} if for any $G$ some neighborhood of $F$, $\Gamma_{G}$ is a transitive set for $G$. The analogues notion will be used for topologically mixing instead of transitivity.
\end{definition}

On the other hand, one can proof easily the following (see \cite{np} for a proof).

\begin{lemma}
Let $F: \Sigma\times M \to \Sigma\times M$, $F (\omega,y) = (\sigma(\omega),g_{\omega_0}(y))$, where 
$\G=\{g_i\}$ be a family of diffeomorphisms such that $\IFS(\G)$ is minimal.
Then $F$ is transitive; and the strong-unstable lamination is minimal.
\end{lemma}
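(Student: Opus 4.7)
The plan is to leverage the fact that along any orbit $F^n(\omega,y)=(\sigma^n\omega,g_{\omega_{n-1}}\circ\cdots\circ g_{\omega_0}(y))$ the fiber component realises exactly an IFS composition encoded in the symbols of $\omega$, while those same symbols can be freely prescribed on long middle blocks of $\omega$. Combined with minimality of $\IFS(\G)$ (which, by definition, forces $\gen{\G}(y)$ to have a dense branch in $M$ for every $y$), this gives enough freedom to control the base and fiber coordinates simultaneously.

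For transitivity, I would verify the equivalent condition that any two basic open sets $U_1\times V_1,U_2\times V_2$ in $\Sigma\times M$ — with $U_j=[a^{(j)}_{-k},\ldots,a^{(j)}_k]$ a cylinder and $V_j\subset M$ open — are linked by an iterate of $F$. Pick any $\tilde y\in V_1$, force $\tilde\omega_{-k},\ldots,\tilde\omega_k=a^{(1)}_{-k},\ldots,a^{(1)}_k$, take $n$ large, and force $\tilde\omega_{n-k},\ldots,\tilde\omega_{n+k}=a^{(2)}_{-k},\ldots,a^{(2)}_k$. This leaves the middle block $\tilde\omega_{k+1},\ldots,\tilde\omega_{n-k-1}$ entirely free. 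Applying minimality of $\IFS(\G)$ to the nonempty open set $W=(g_{a^{(2)}_{-1}}\circ\cdots\circ g_{a^{(2)}_{-k}})^{-1}(V_2)$ and to the point $(g_{a^{(1)}_k}\circ\cdots\circ g_{a^{(1)}_0})(\tilde y)$, I choose the middle symbols so that the fiber composition sends $\tilde y$ into $V_2$; the cylinder choices at index $n$ simultaneously put $\sigma^n\tilde\omega\in U_2$. Compactness of $\Sigma\times M$ plus the Baire category theorem then upgrades this to a residual set of points with dense $F$-orbit.

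For minimality of the strong-unstable lamination, local strong-unstable leaves of $F$ project to $\{\omega_-\}\times\Sigma_+$ in the base and, through $(\omega,y)$, in the unperturbed skew-product setting take the form $\{(\omega',y):\omega'_j=\omega_j\text{ for }j<0\}$. Their forward $F$-saturations consist of points $(\sigma^n\omega',g_{\omega'_{n-1}}\circ\cdots\circ g_{\omega'_0}(y))$ with $n$ and the free tail $(\omega'_0,\omega'_1,\ldots)$ varying. Given a target $(\eta_0,z_0)\in\Sigma\times M$ and $\varepsilon>0$, I pick $n$ large, prescribe the symbols $\omega'_{n-k},\ldots,\omega'_{n+k}$ so that $\sigma^n\omega'$ agrees with $\eta_0$ in positions $-k,\ldots,k$, and then invoke minimality of $\IFS(\G)$ on the earlier free symbols so that $g_{\omega'_{n-k-1}}\circ\cdots\circ g_{\omega'_0}(y)$ lands close to the preimage of $z_0$ under the fixed final $k$ fiber maps; continuity of those finitely many maps then yields a point of the leaf $\varepsilon$-close to $(\eta_0,z_0)$.

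The only real obstacle is bookkeeping: the constrained end-segments near indices $\pm k$ and $n\pm k$ contribute forced end-compositions, and one has to apply $\IFS(\G)$-minimality to an appropriately shifted open set and base point so that these fixed end-compositions still deliver the orbit into the desired window. Both assertions reduce to the same combinatorial move, namely using a long run of free symbols to realise, via $\IFS(\G)$-minimality, any required intermediate composition on the fiber up to an arbitrarily small error.
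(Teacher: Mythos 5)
Your first half is fine: for transitivity you sandwich a free middle block between the two forced cylinder blocks and apply minimality of $\IFS(\G)$ to the point $A(\tilde y)=g_{a^{(1)}_k}\circ\cdots\circ g_{a^{(1)}_0}(\tilde y)$ and the open set $B^{-1}(V_2)$ with $B=g_{a^{(2)}_{-1}}\circ\cdots\circ g_{a^{(2)}_{-k}}$, then choose $n$ to be $2k+1$ plus the length of the word that minimality produces; this is the standard argument (the paper itself gives no proof and defers to \cite{np}).

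The second half has a genuine gap. The set you work with, namely the points $(\sigma^n\omega',\,g_{\omega'_{n-1}}\circ\cdots\circ g_{\omega'_0}(y))$ with $\omega'_j=\omega_j$ for $j<0$, is the \emph{forward saturation} of the local strong-unstable leaf of $(\omega,y)$; it sits inside the leaves through the forward iterates $F^n(\omega,y)$, not inside the single leaf $W^{uu}(\omega,y)$, so its density does not prove minimality of the lamination. The actual leaf through $(\omega,y)$ is $\bigcup_{n\ge 0}F^n\bigl(W^{uu}_{\mathrm{loc}}(F^{-n}(\omega,y))\bigr)$: its fiber coordinates are $g_{\eta_{n-1}}\circ\cdots\circ g_{\eta_0}(y_{-n})$ with $y_{-n}=g_{\omega_{-n}}^{-1}\circ\cdots\circ g_{\omega_{-1}}^{-1}(y)$, and once you impose the target cylinder on coordinates $[-k,k]$ the last $k$ letters are forced while the free word must have length \emph{exactly} $n-k$. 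So you need a word of prescribed length, applied to a basepoint $y_{-n}$ that changes with $n$, to hit a given open set; bare minimality of $\IFS(\G)$ (some word of some length from a fixed point) does not supply this, and it is not a bookkeeping issue. The obstruction is real: take $g_1=R_\alpha$ and $g_2=R_{\alpha+1/2}$ two commuting circle rotations with $\alpha$ irrational; then $\IFS(\{g_1,g_2\})$ is minimal, yet every strong-unstable leaf of the associated $F$ has fiber coordinates confined to $\{y,\,y+\tfrac12\}$, so no leaf is dense. Hence any correct proof of the lamination statement must use more than density of branches — in this paper's applications the extra input is the contraction/covering structure of the blending region (as in Theorem~\ref{thm strong}), which is the setting in which the proof referred to in \cite{np} operates.
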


As a corollary one obtains the following criterion for robust transitivity:

\begin{theorem}[Sufficient condition for robust transitivity]\label{thm C}  
Let $F$ as in Lemma \ref{lem hps} and suppose that for $i=1, \dots, k$,  
$$F|_{R_i \times M} = f|_{R_i}\times g_i$$
and such that $\IFS (\{g_1, \dots, g_k\})$ is strongly robustly minimal.
Then, $F$ is robustly transitive on $\Gamma_F$, and the  strong-unstable lamination is robustly minimal in $\Gamma_F$.
\end{theorem}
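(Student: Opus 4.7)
The plan is to reduce to a symbolic skew product via Lemma~\ref{lem hps} and then exploit strong robust minimality. For $G$ in a small $C^1$-neighborhood $\mathcal{W}$ of $F$, Lemma~\ref{lem hps} conjugates $G|_{\Gamma_G}$ to $H_G(\omega, y) = (\sigma \omega, h_{G, \omega}(y))$ on $\Sigma \times M$, with the assignment $\omega \mapsto h_{G, \omega}$ continuous into $\mathrm{Diff}^1(M)$ and $h_{F, \omega} = g_{\omega_0}$. Writing $\mathcal{U}$ for the neighborhood of $\G = \{g_1, \dots, g_k\}$ furnished by the strong robust minimality of $\IFS(\G)$, I choose $\mathcal{W}$ so small that whenever $G \in \mathcal{W}$ and $\omega^{(1)}, \dots, \omega^{(k)} \in \Sigma$ satisfy $\omega^{(i)}_0 = i$, the $k$-element family $\{h_{G, \omega^{(1)}}, \dots, h_{G, \omega^{(k)}}\}$ lies in $\mathcal{U}$. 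It then suffices to prove that every such $H_G$ is transitive on $\Sigma \times M$ and that its strong-unstable lamination is minimal.

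For transitivity I fix open sets $U \supset [\alpha_0, \dots, \alpha_s] \times A$ and $V \supset [\beta_0, \dots, \beta_t] \times B$ and $y \in A$, and seek $\omega$ starting with $\alpha_0 \dots \alpha_s$ and $n > s + t$ with $\omega_n \dots \omega_{n+t} = \beta_0 \dots \beta_t$ such that $h_{G, \sigma^{n-1}\omega} \circ \dots \circ h_{G, \omega}(y) \in B$. The free degrees of freedom are the intermediate entries $\omega_{s+1}, \dots, \omega_{n-1}$ and the integer $n$. Building $\omega$ inductively, given the prefix $\omega_0, \dots, \omega_{j-1}$ the possible step-$j$ fiber maps form a family $\F_j = \{h_{G, \sigma^j \tilde\omega^{(i)}} : i = 1, \dots, k\}$, where $\tilde\omega^{(i)}$ extends the prefix by setting $\omega_j = i$ and using some fixed default tail; by the choice of $\mathcal{W}$ each $\F_j$ lies in $\mathcal{U}$. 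Strong robust minimality applied to $\mathbf{F} = (\F_1, \F_2, \dots)$ then yields a branch of $\Pgen{\mathbf{F}}(y)$ dense in $M$, and reading off the choices made along the branch prescribes $\omega_{s+1}, \omega_{s+2}, \dots$; one halts at some $n > s + t$ for which the current orbit point lies in a slightly shrunk copy of $B$, and appends $\omega_n \dots \omega_{n+t} = \beta_0 \dots \beta_t$.

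The main obstacle is that $h_{G, \sigma^j \omega}$ depends on all of $\omega$, so committing to later entries retroactively perturbs the fiber maps already used at earlier stages. This is resolved using uniform continuity of $\omega \mapsto h_{G, \omega}$: for each $\eta > 0$ there is $N = N(\eta)$ such that sequences agreeing on $[-N, N]$ produce $C^1$-$\eta$-close fiber maps. I therefore shrink $\mathcal{W}$ and work with a nested sequence $\mathcal{U} \supset \mathcal{U}_1 \supset \mathcal{U}_2 \supset \cdots$ of subneighborhoods to absorb these small perturbations, so strong robust minimality continues to apply at each stage. Equivalently, $\omega$ is realised as the limit of its committed prefixes, the actual fiber orbit is the limit of the intermediate orbits of the earlier-stage families, and the limiting orbit still enters $B$.

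Finally, robust minimality of the strong-unstable lamination is proved by the same branching construction, now read as a statement about forward iterates of a local strong-unstable leaf. A local strong-unstable manifold through $(\omega, y)$ fibers over $\{\omega_-\} \times \Sigma_+$, and its global extension allows finite modifications of the past coordinates, giving a base projection dense in $\Sigma$. Given any open cylinder-ball $V \subset \Sigma \times M$, the same construction produces a point of this leaf in $V$ by prescribing the forward-tail entries of $\omega$ to steer the fiber component into the $M$-part of $V$. Because every step only uses data in $\mathcal{U}$, the conclusion is robust over $\mathcal{W}$, completing the proof.
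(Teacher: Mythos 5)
Your overall architecture matches the paper's intended route: reduce via Lemma~\ref{lem hps} to a symbolic skew product $H_G(\omega,y)=(\sigma\omega,h_{G,\omega}(y))$ with fiber maps uniformly $C^1$-close to the $g_i$, and then let strong robust minimality of $\IFS(\{g_1,\dots,g_k\})$ produce dense fiber behaviour for every such $H_G$. However, the step where you actually invoke Definition~\ref{def strong} has a genuine gap, and it is exactly the point the "strong" notion is meant to address. First, your use of the definition is circular: Definition~\ref{def strong} asks that the entire sequence $\mathbf{F}=(\mathcal{F}_1,\mathcal{F}_2,\dots)$ of families be given \emph{in advance}, and only then guarantees a dense branch; in your construction $\mathcal{F}_j$ is built from the prefix $\omega_0,\dots,\omega_{j-1}$, which is read off from the very branch whose existence you are invoking, and moreover the true fiber map at step $j$ depends on symbols $\omega_{j+1},\omega_{j+2},\dots$ that have not yet been chosen. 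Second, the limiting argument that is supposed to repair this ("the actual fiber orbit is the limit of the intermediate orbits\dots and the limiting orbit still enters $B$") is unjustified: an $\eta$-perturbation of the map used at step $j$ changes the orbit point at the target time $n$ by up to $\eta\, L^{\,n-j}$, where $L$ bounds the Lipschitz constants of the fiber maps, and these maps are in general expanding away from the blending region (indeed both constructions of Theorem~\ref{thm A} contain expansions). Uniform continuity of $\omega\mapsto h_{G,\omega}$ only makes $\eta$ small when the modified symbols are far from step $j$, which is not the case for the symbols you append right before time $n$ (the word $\beta_0\dots\beta_t$, and any past coordinates that a two-sided cylinder for $V$ prescribes), and a nested chain $\mathcal{U}\supset\mathcal{U}_1\supset\cdots$ of subneighborhoods does nothing to beat the factor $L^{\,n-j}$ since $n$ is not known in advance.

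The way the paper closes this hole is not by orbit-by-orbit approximation but by the set-based contraction-plus-covering mechanism: in Lemma~\ref{lem sufficient} and Theorem~\ref{thm strong} the key statement is that suitable compositions map the whole blending region $D$ into a prescribed ball, and because all fiber maps involved there are uniform contractions on $D$, replacing each map by any map within the fixed tolerance (in particular, letting it depend on the unknown tail of $\omega$) changes the image of $D$ by at most $\eta/(1-\kappa)$, so the covering conclusion is insensitive to the dependence of $h_{G,\omega}$ on all of $\omega$; the excursions into and out of $D$ use words of uniformly bounded length, and the two-sided shift is handled by projecting along the strong stable foliation ($\pi^+$), exactly as in Proposition~\ref{p:blender} and in \cite{np}. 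Your proof would become correct if, instead of applying Definition~\ref{def strong} as a black box along an adaptively built itinerary, you ran the argument of Theorem~\ref{thm strong}/Proposition~\ref{p:blender} for the families $\{h_{G,\omega}:\omega_0=i\}$, where the contraction on $D$ absorbs the tail dependence; as written, both the transitivity part and the strong-unstable minimality part rest on the unproved limiting step.
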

Remark that Theorem \ref{thm strong} gives a sufficient condition to guarantee the main hypothesis of the  previous theorem, i.e. an IFS being strongly robustly minimal. 
Therefore, Theorems \ref{thm strong} and \ref{thm C} provide a general framework 
to prove robust transitivity, reformulating to the role of blenders.

\def\cprime{$'$}

\end{document}